\documentclass[pdflatex,sn-mathphys-num]{sn-jnl}

\usepackage{amsmath,amssymb,amsfonts}%
\usepackage{amsthm}%
\usepackage{accents}%
\usepackage{algorithm}%
\usepackage{algorithmicx}%
\usepackage{algpseudocode}%
\usepackage[title]{appendix}%
\usepackage{booktabs}%
\usepackage{comment}
\usepackage{graphicx}%
\usepackage{listings}%
\usepackage{manyfoot}%
\usepackage{mathrsfs}%
\usepackage{mathtools}%
\usepackage{multirow}
\usepackage{textcomp}%
\usepackage{tikz-cd}%
\usepackage{xcolor}%

\makeatletter
\def\bar{\accentset{{\cc@style\underline{\mskip13mu}}}}
\makeatother

\theoremstyle{thmstyleone}%
\newtheorem{theorem}{Theorem}[section]
\newtheorem{definition}[theorem]{Definition}%
\newtheorem{lemma}[theorem]{Lemma}%
\newtheorem{proposition}[theorem]{Proposition}%
\newtheorem{corollary}[theorem]{Corollary}%

\theoremstyle{thmstyletwo}%
\newtheorem{remark}[theorem]{Remark}%

\theoremstyle{thmstylethree}%
\numberwithin{equation}{section}

\begin{document}
\title{On superspecial hyperelliptic curves of genus 5 whose automorphism groups contain $(\mathbb{Z}/2\mathbb{Z})^3$}


\author*[1]{\fnm{Ryo} \sur{Ohashi}}\email{ryo-ohashi@g.ecc.u-tokyo.ac.jp}
\author[2]{\fnm{Momonari} \sur{Kudo}}\email{m-kudo@fit.ac.jp}

\affil*[1]{\orgdiv{Graduate School of Information Science and Technology}, \orgname{The University of Tokyo}, \orgaddress{\street{7-3-1 Hongo}, \city{Bunkyo-ku}, \postcode{113-0033}, \state{Tokyo}, \country{Japan}}}
\affil[2]{\orgname{Fukuoka Institute of Technology}, \orgaddress{\street{3-30-1 Wajiro-higashi}, \city{Higashi-ku}, \postcode{811-0295}, \state{Fukuoka}, \country{Japan}}}


\abstract{While the numbers of superspecial curves of genus at most 3 are well understood, and several computational approaches have been developed to count superspecial curves of genus 4 with large automorphism groups, much less is known in higher genera.
In this paper, we construct a feasible algorithm to enumerate superspecial hyperelliptic curves of genus 5 whose automorphism groups contain $\hspace{-0.3mm}(\mathbb{Z}/2\mathbb{Z})^3\hspace{-0.2mm}$.
We implement and executing our algorithm in Magma, we succeeded in enumerating such superspecial curves in every characteristic $\hspace{-0.2mm}11 < p < 1000$.}

\keywords{ Curves of genus five, \,Hyperelliptic curves, \,Superspecial curves.}



\maketitle

\section{ Introduction}\label{sec1}
Throughout, all the complexities are measured by the number of arithmetic operations in $\mathbb{F}_{p^2}\hspace{-0.3mm}$ for a prime $p$, unless otherwise noted.
Soft-O notation omits logarithmic factors.
A \emph{curve} means a non-singular projective variety of dimension one.
Let $K$ be a field of characteristic $p > 0$, and $\bar{K}$ its algebraic closure.
A genus-$g$ curve $C$ defined over $K$ is said to be \emph{superspecial} when its Jacobian variety is isomorphic (over $\bar{K}$) to a product of supersingular elliptic curves.
Superspecial curves are important objects not only in theory but also in practical applications such as isogeny-based cryptography, which is a candidate for post-quantum cryptography.

For a given pair $(g,p)$, it is known that there exist only finitely many superspecial genus-$g$ curves over $\bar{\mathbb{F}}_p$, and thus the problem of determining the number of them is of fundamental importance.
For the cases where $g \leq 3$, this problem has been completely solved for all $p > 0$, based on the theory of principally polarized abelian varieties.
We refer to \cite{Kudo} for a survey of related works.

In contrast to the cases where $g \leq 3$, the problem for $g \geq 4$ has not been solved for all primes.
However, over the last decade, several computational approaches for $g=4$ have been proposed; \,see \cite{KH17},\,\cite{KHH20} for the non-hyperelliptic cases, and \cite{KH22},\,\cite{OKH23},\,\cite{KNT23},\,\cite{OK24},\,\cite{TOT25} for the hyperelliptic cases.
In particular, Kudo, Nakagawa, and Takagi proposed in \cite{KNT23} an algorithm to find a superspecial hyperelliptic genus-4 curve with automorphism group containing $\mathbb{Z}/6\mathbb{Z}$, whose extended abstract was presented at CASC2022.
In \cite{OKH23} and \cite{OK24}, the authors presented enumeration algorithms for superspecial hyperelliptic curves of genus 4 whose automorphism groups contain the Klein 4-group (or the dihedral group of order 8).
Their computational results in \cite{OK24} implies an expectation that there exists a superspecial hyperelliptic curve of genus 4 in characteristic $p$ for any $p$ with $p \geq 19$.

The next target is the case where $g=5$; however, there is almost no prior work on the enumeration of superspecial genus-5 curves (except for Kudo-Harashita's result~\cite{KH18} on the trigonal case in characteristic $p \leq 13$).
In this paper, analogously to our prior works~\cite{OKH23} and \cite{OK24} for genus-4 curves, we restrict our attention to hyperelliptic curves of genus 5 with automorphism group containing $(\mathbb{Z}/2\mathbb{Z})^3$.
The main reason for studying such curves is that, via the decomposition of Jacobian varieties described in Section~\ref{sec3}, their superspeciality can be reduced to that of curves of genus at most 2, which allows us to construct superspecial curves efficiently.
Exploiting this property, we construct a feasible algorithm to enumerate such curves in characteristic $p > 11$; \,for an explicit description, see Section~\ref{sec5}.
Also, we can show that all the computations in our algorithm are carried out over $\mathbb{F}_{p^2}$, and the complexity of our algorithm is proved to be $\widetilde{O}(p^3)$.
By implementing and executing the algorithm in \textsf{Magma} Computational Algebra System, we obtain the following main result:\vspace{-4.5mm}
\begin{theorem}\label{thm:main}
There exists a superspecial hyperelliptic curve of genus 5 whose automorphism group contains $(\mathbb{Z}/2\mathbb{Z})^3$ in any characteristic $23 \leq p < 1000$, except for
\begin{align*}
    p =\, & 37,41,43,53,61,67,73,97,181,193,197,233,241,277,313,331,373,421,\\[-1.1mm]
    & 541,571,613,643,673,709,769,877,977.
\end{align*}
The number of $\bar{\mathbb{F}}_p$-isomorphism
classes of such curves are summarized in Tables \ref{tbl:13-450} and \ref{tbl:450-1000}.
\end{theorem}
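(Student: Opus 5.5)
The theorem is ultimately a computational statement, so the plan is to reduce it to a finite, provably complete enumeration for each prime $23 \le p < 1000$ and then run that enumeration.

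\textbf{Step 1 (normal form).} A hyperelliptic genus-5 curve $C$ over $\bar{\mathbb{F}}_p$ has $\operatorname{Aut}(C) \supseteq (\mathbb{Z}/2\mathbb{Z})^3$ exactly when the reduced automorphism group contains a Klein four-group $V \subset \mathrm{PGL}_2$, possibly together with the hyperelliptic involution. After a M\"obius change of coordinates, I would normalize $V$ to $\{x\mapsto \pm x,\ x\mapsto \pm 1/x\}$. Then $C$ has an equation $y^2 = f(x)$ in which the twelve Weierstrass points form a union of $V$-orbits of size four, so that
\[
f(x) = \prod_{i=1}^{3}\bigl(x^4 + a_i x^2 + 1\bigr),
\]
up to degenerate cases where branch points are fixed by some element of $V$. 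These degenerate cases are handled separately by the same method.

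\textbf{Step 2 (reduction of superspeciality).} Each of the seven nontrivial elements of $(\mathbb{Z}/2\mathbb{Z})^3$ other than the hyperelliptic involution gives a quotient curve. By the Kani--Rosen type decomposition of Section~\ref{sec3}, $\operatorname{Jac}(C)$ is isogenous to a product of Jacobians of quotients of genus at most $2$. I expect these to be two elliptic curves and three further curves of small genus, matching $5 = \sum g_i$. The key point is that superspeciality is preserved under isogeny up to the product-of-supersingular condition. Hence $C$ is superspecial if and only if every factor is superspecial.

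Step 2 turns the search into solving for parameters $(a_1,a_2,a_3)$ such that:
\begin{itemize}
\item the elliptic quotients have supersingular $j$-invariants, which are finitely many and computable in $\widetilde{O}(p)$;
\item the genus-2 quotients are superspecial, which can be tested via their Cartier--Manin matrices.
\end{itemize}

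\textbf{Step 3 (enumeration algorithm).} I would parametrize by the supersingular data. Choose supersingular elliptic quotients among the $O(p)$ roots of the supersingular polynomial over $\mathbb{F}_{p^2}$. Each choice pins down $O(1)$ constraints on the $a_i$, cutting the family to finitely many candidates over $\mathbb{F}_{p^2}$, about $O(p^2)$ or $O(p^3)$ overall. For each candidate, check the Cartier--Manin (Hasse--Witt) matrix of $C$, or of its genus-2 quotients, for vanishing. Finally, remove duplicates up to $\bar{\mathbb{F}}_p$-isomorphism using invariants of the triple $\{a_i\}$ under the normalizer of $V$, which acts on the $a_i$ by a finite group, together with permutations.

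\textbf{Step 4 (computation).} Run the algorithm in Magma for each $p$ in the range. Record the primes with zero output, which gives the listed exceptions, and tabulate the class counts.

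\textbf{Main obstacle.} The hardest part is Step 2 together with the completeness of Step 3. One must verify precisely which quotients occur and what genera they have, including the degenerate cases. One must also prove that all candidates are defined over $\mathbb{F}_{p^2}$, so that the search space is finite and the $\widetilde{O}(p^3)$ bound holds. I would first attempt the latter by showing that superspecial elliptic and genus-2 curves have models over $\mathbb{F}_{p^2}$ with Frobenius $\pm p$, and that the parameters $a_i$ are rational functions of the quotient invariants.
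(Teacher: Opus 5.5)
Your architecture matches the paper's: the normal form $\prod_i(x^4+a_ix^2+1)$, a splitting of the Jacobian, $\mathbb{F}_{p^2}$-rationality, deduplication by the six transformations of Proposition~\ref{prp:isomof4-1}, and a Magma run. But Step~2, on which everything rests, is wrong as stated, in two ways.

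First, the splitting is not ``two elliptic curves and three further curves''. There are six (not seven) non-hyperelliptic involutions, and their quotients have genus $2$ or $3$; for example $H/\langle\sigma\rangle$ has genus $2$ and $H/\langle\sigma\circ\iota\rangle$ has genus $3$. Iterating the Richelot splitting gives, as in Theorem~\ref{thm:jacof4-1}, ${\rm Jac}(H_{a,b,c})\sim E_1\times E_2\times E_3\times{\rm Jac}(C)$. Here $E_1,E_2,E_3$ are branched over $\{\lambda,\mu,\nu\}$ together with $\infty$, $0$, $1$ respectively, and $C$ is branched over all six points. The count matters: with only two elliptic factors, fixing their $j$-invariants cuts the three-parameter family down to a one-dimensional locus, not to finitely many points, so Step~3's finiteness claim fails.

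Second, superspeciality is \emph{not} preserved by arbitrary isogenies; a supersingular abelian surface with $a$-number $1$ is isogenous to $E\times E$. You need the isogeny to have degree prime to $p$, so that it induces an isomorphism of $p$-torsion group schemes. Here the degree is a power of $2$, which is exactly the argument in the corollary of Section~\ref{sec3}.

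With the correct splitting, your elliptic-first search is a genuine alternative to Algorithm~\ref{alg:type4-1}, which instead lists the $O(p^3)$ superspecial Rosenhain forms via the Richelot isogeny graph and then tests $E_1,E_2,E_3$. It also settles your ``main obstacle''. Your own sketch does not, because the $a_i$ are only algebraic, not rational, functions of the invariants of the quotients. Instead, let $M$ be the M\"obius map sending $(\lambda,\mu,\nu)$ to $(0,1,\infty)$. Then $M(\infty)$, $M(0)$, $M(1)$ are Legendre parameters of $E_1$, $E_2$, $E_3$. Supersingular Legendre parameters lie in $\mathbb{F}_{p^2}$ (the fact behind Proposition~\ref{prop:rationalof9}). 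Hence $M^{-1}$ is defined over $\mathbb{F}_{p^2}$, and so are $\lambda,\mu,\nu$ and $a=2(\lambda+1)/(\lambda-1)$. This avoids the genus-2 theorem the paper cites, and your candidates become the roughly $p^3/8$ ordered triples of such parameters. The cost moves to testing $C$ for each candidate, which is $O(p)$ apiece by Cartier--Manin unless you precompute the superspecial genus-2 curves anyway.

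Three smaller repairs are needed.
\begin{itemize}
\item Deduplication via the normalizer of $V$ is only proved (Theorem~\ref{thm:isomof4-1}) when ${\rm Aut}(H)\cong\mathbf{C}_2^3$. If $\overline{\rm Aut}(H)\supsetneq V$, an isomorphism need not normalize $V$. You must either show that the Klein subgroups lifting to $\mathbf{C}_2^3$ are conjugate in $\overline{\rm Aut}(H)$, or test isomorphism directly as Algorithms~\ref{alg:type7}--\ref{alg:type10} do.
\item You need ${\rm Aut}(H)$ for each representative to fill the per-type columns of the tables.
\item Your ``degenerate cases'' never occur. An element of $V$ fixing a branch point lifts only to automorphisms of order $4$, so the preimage of $V$ is then not $\mathbf{C}_2^3$.
\end{itemize}
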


The rest of this paper is structured as follows:
\,Section~\ref{sec2} reviews the classification of hyperelliptic genus-5 curves according to their automorphism groups.
In Sections~\ref{sec3} and \ref{sec4}, we prove several properties of hyperelliptic genus-5 curves whose automorphism groups contain $(\mathbb{Z}/2\mathbb{Z})^3$.
In Section~\ref{sec5}, we propose an algorithm for enumerating such curves that are superspecial, and we present the experimental results of executing our algorithm in Section~\ref{sec6}.
Finally, Section~\ref{sec7} concludes the paper with some remarks.

\paragraph*{Acknowledgments.}
This work was supported by JSPS Grant-in-Aid for Young
Scientists JP25K17225 and JP23K12949.

\section{ Classification of hyperelliptic genus-5 curves}\label{sec2}
In the following, we use the following notation for the finite groups:\vspace{-1.9mm}
\begin{alignat*}{3}
    &\mathbf{C}_n: \,\mbox{the cyclic group of order $n$}; && \mathbf{D}_{2n}: \,\mbox{the dihedral group of order $2n$};\\[-1mm]
    & \mathbf{S}_n: \,\mbox{the symmetric group of degree $n$}; \quad && \mathbf{A}_n: \,\mbox{the alternating group of degree $n$}.\\[-5.6mm]
\end{alignat*}
For a hyperelliptic curve $H$ defined over an algebraically closed field $k$, we denote the reduced (resp.\ full) automorphism group of $H$ over $k$ by $\overline{\mathrm{Aut}}(H)$ (resp.\ ${\rm Aut}(H)$).
The possible automorphism groups of hyperelliptic curves of genus $g$ in characteristic 0 are classified by \cite[Table 1]{Shaska}.
His result can be extended to an algebraically closed field $k$ of characteristic $p > g+1$ with $p \neq 2g+1$ following the discussion in \cite[Section 3.1]{LR}.\\
By writing down the case $g=5$ explicitly, we obtain the following result:\vspace{-4.5mm}
\begin{proposition}\label{prp:classification}
Let $H$ be a hyperelliptic genus-5 curve over an algebraically closed field $k$ of characteristic $p \geq 0$ with $p \neq 2,3,5,11$.
The reduced (resp. full) automorphism group of $H$ is one of those listed in the second (resp. third) column of Table~\ref{table:classification}.
The hyperelliptic curve $H$ for each type is isomorphic to $y^2 = f(x)$ in the fourth column of Table~\ref{table:classification}.
\begin{table}[htbp]
    \centering
    \caption{Possible automorphism groups of genus-5 hyperelliptic curves $H$ defined over an algebraically closed field $k$ of characteristic $p \geq 0$ with $p \neq 2,3,5,11$, together with defining equations $y^2=f(x)$ for $H$, where $a,b,c,d,e \in k$.}\label{table:classification}\vspace{-2.5mm}
    \begin{tabular}{c||c|c|l}
        Type & $\overline{\mathrm{Aut}}(H)$ & ${\mathrm{Aut}}(H)$ & Defining equation $y^2 = f(x)$ for $H$ (in normal form)\\\hline
        {\bf 1} & $\{0\}$ & $\mathbf{C}_2$ & $y^2 = (\text{square-free polynomial in $x$ of degree $11$ or $12$})$\\\hline
        {\bf 2-1} & \multirow{2}{*}{$\mathbf{C}_2$} & $\mathbf{C}_2^2$ & $y^2 = (x^2-1)(x^2-a)(x^2-b)(x^2-c)(x^2-d)(x^2-e)$\\
        {\bf 2-2} & & $\mathbf{C}_4$ & $y^2 = x(x^2-1)(x^2-a)(x^2-b)(x^2-c)(x^2-d)$\\\hline
        {\bf 3} & $\mathbf{C}_3$ & $\mathbf{C}_6$ & $y^2 = (x^3-1)(x^3-a)(x^3-b)(x^3-c)$\\\hline
        {\bf 4-1} & \multirow{2}{*}{$\mathbf{C}_2^2$} & $\mathbf{C}_2^3$ & $y^2 = (x^4+ax^2+1)(x^4+bx^2+1)(x^4+cx^2+1)$\\[-0.3mm]
        {\bf 4-2} & & $\mathbf{C}_2 \times \mathbf{C}_4$ & $y^2 = (x^4-1)(x^4+ax^2+1)(x^4+bx^2+1)$\\\hline
        {\bf 5} & $\mathbf{C}_4$ & $\mathbf{C}_2 \times \mathbf{C}_4$ & $y^2 = (x^4-1)(x^4-a)(x^4-b)$\\\hline
        {\bf 6-1} & \multirow{2}{*}{$\mathbf{D}_6$} & $\mathbf{D}_{12}$ & $y^2 = (x^6+ax^3+1)(x^6+bx^3+1)$\\[-0.3mm]
        {\bf 6-2} & & $\mathbf{C}_3 \rtimes \mathbf{C}_4$ & $y^2 = (x^6-1)(x^6+ax^3+1)$\\\hline
        {\bf 7} & $\mathbf{D}_8$ & $\mathbf{C}_2^2 \rtimes \mathbf{C}_4$ & $y^2 = (x^4-1)(x^8+ax^4+1)$\\\hline
        {\bf 8} & $\mathbf{D}_{10}$ & $\mathbf{D}_{20}$ & $y^2 = x^{11}+ax^6+x$\\\hline
        {\bf 9} & $\mathbf{D}_{12}$ & $\mathbf{C}_2 \times \mathbf{D}_{12}$ & $y^2 = x^{12}+ax^6+1$\\\hline
        {\bf 10} & $\mathbf{A}_4$ & $\mathbf{C}_2 \times \mathbf{A}_4$ & $y^2 = x^{12}-ax^{10}-33x^8+2ax^6-33x^4-ax^2+1$\\\hline
        {\bf 11} & $\mathbf{S}_4$ & $\mathbf{A}_4 \rtimes \mathbf{C}_4$ & $y^2 = x^{12}-33x^8-33x^4+1$\\\hline
        {\bf 12} & $\mathbf{A}_5$ & $\mathbf{C}_2 \times \mathbf{A}_5$ & $y^2 = x^{11}+11x^6-x$\\\hline
        {\bf 13} & $\mathbf{C}_{11}$ & $\mathbf{C}_{22}$ & $y^2 = x^{11}+1$\\\hline
        {\bf 14} & $\mathbf{D}_{20}$ & $\mathbf{C}_4 \times \mathbf{D}_{10}$ & $y^2 = x^{11}+x$\\\hline
        {\bf 15} & $\mathbf{D}_{24}$ & $\mathbf{D}_{12} \rtimes \mathbf{C}_4$ & $y^2 = x^{12}+1$
    \end{tabular}
\end{table}
\end{proposition}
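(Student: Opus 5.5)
The plan is to derive the statement from Shaska's characteristic-zero classification \cite[Table 1]{Shaska}, specialized to $g=5$, and to extend it to characteristic $p$ following \cite[Section 3.1]{LR}. The key structural fact is that the hyperelliptic involution $\iota$ is central in $\mathrm{Aut}(H)$. Hence there is an exact sequence
\begin{equation*}
1 \to \langle \iota \rangle \to \mathrm{Aut}(H) \to \overline{\mathrm{Aut}}(H) \to 1 ,
\end{equation*}
where $\overline{\mathrm{Aut}}(H)$ is a finite subgroup of $\mathrm{PGL}_2(k)$ preserving the set $B$ of $12$ branch points on $\mathbb{P}^1$.

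The first step is to list the candidate reduced groups. Since $p \nmid |G|$ will be arranged, or handled separately, $G$ is one of $\mathbf{C}_n$, $\mathbf{D}_{2n}$, $\mathbf{A}_4$, $\mathbf{S}_4$, $\mathbf{A}_5$, by Dickson's classification of finite subgroups of $\mathrm{PGL}_2(k)$ of order prime to $p$. The subgroups with $p \mid |G|$ are excluded as follows:
\begin{itemize}
\item An element of order $p$ in $\mathrm{PGL}_2(k)$ has a single fixed point and orbits of size $p$ elsewhere.
\item Since $|B| = 12$ and $p \ge 7$, $p \ne 11$, such an element would force $B$ to be a union of $p$-orbits, plus possibly the one fixed point.
\item This gives $12 \equiv 0$ or $1 \pmod p$, which is impossible for $p=7$ and for every $p \ge 13$.
\end{itemize}
This is exactly where the hypothesis $p \neq 11$ is used, since $12 \equiv 1 \pmod{11}$ would allow it. The hypothesis $p \neq 2,3,5$ removes the small primes dividing the orders of $\mathbf{A}_4$, $\mathbf{S}_4$, $\mathbf{A}_5$, together with the wild phenomena of \cite{LR}.

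The second step is to determine which $G$ actually occur. For each candidate I would use the orbit structure of $G$ on $\mathbb{P}^1$: the generic orbits have size $|G|$, and the special orbits have sizes $|G|/e_i$. The condition is that $B$ is a $G$-invariant set of $12$ points, so $12 = m|G| + \sum \epsilon_i |G|/e_i$ with $\epsilon_i\in\{0,1\}$. Solving this gives the list of Table~\ref{table:classification}:
\begin{itemize}
\item for cyclic groups, $n \in \{2,3,4,11,\dots\}$, where the admissible $n$ are those dividing $12$ or $11$, or dividing $10$ with both fixed points in $B$;
\item for dihedral groups, the cases $\mathbf{D}_6$, $\mathbf{D}_8$, $\mathbf{D}_{10}$, $\mathbf{D}_{12}$, $\mathbf{D}_{20}$, $\mathbf{D}_{24}$, and also $\mathbf{C}_2^2=\mathbf{D}_4$;
\item for the polyhedral groups, $\mathbf{A}_4$ with orbits $12$ or $6+6$ (the latter being impossible since there is one orbit of size 6), $\mathbf{S}_4$ with $12$ or $6+6$, and $\mathbf{A}_5$ with the $12$-orbit.
\end{itemize}
Choosing coordinates so that $G$ acts by standard generators (for example $x\mapsto \zeta x$ and $x \mapsto 1/x$) then yields the normal forms in the fourth column.

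The third step is to determine the full group $\mathrm{Aut}(H)$ as a central extension of $G$ by $\mathbf{C}_2$. This depends on whether the lifts of the generators to $y \mapsto \pm\sqrt{\cdot}\,y$ square to $1$ or to $\iota$. That in turn is decided by whether the fixed points of each involution of $G$ lie in $B$. For instance, in Type 4-1 versus 4-2, the question is whether $\pm 1, \pm i$ are branch points. One computes this directly from the normal form, case by case, and it gives the third column.

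I expect the main obstacle to be the bookkeeping for the extension to characteristic $p$: checking that no extra automorphisms arise for special $p$. By the first step, every automorphism group has order prime to $p$, which reduces the problem to the tame situation. The lifting argument of \cite[Section 3.1]{LR} then shows that the characteristic-zero classification of \cite{Shaska} transfers verbatim. The assumption $p \neq 2g+1 = 11$ is precisely what rules out the extra wild group of order divisible by $p$ acting on $y^2 = x^p - x$.
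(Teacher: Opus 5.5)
Your proposal is correct and takes essentially the same route as the paper, which specializes Shaska's characteristic-zero table \cite[Table 1]{Shaska} to $g=5$ and transfers it to characteristic $p>g+1$, $p\neq 2g+1$, via \cite[Section 3.1]{LR}; your observation that an order-$p$ element of $\mathrm{PGL}_2(k)$ preserving the $12$ branch points would force $12\equiv 0$ or $1 \pmod{p}$ is a clean, explicit reason why that transfer reduces to the tame case. One small slip: $\mathbf{S}_4$, like $\mathbf{A}_4$, has only one orbit of size $6$ on $\mathbb{P}^1$, so its ``$6+6$'' option is also impossible, and only the special $12$-orbit remains (this gives Type \textbf{11}).
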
\vspace{-10.5mm}

In particular, the superspeciality of the last three curves (i.e., Types {\bf 13},\,{\bf 14},\,{\bf 15}) in Table \ref{table:classification} can be determined immediately as follows:\vspace{-4.6mm}
\begin{proposition}
Assume that $p \geq 11$.
Then, the following statements are true:\vspace{-0.7mm}
\begin{enumerate}
\item[{\rm (a)}] The genus-5 curve $y^2 = x^{11} + 1$ is superspecial if and only if $p \equiv 11 \pmod{10}$.
\item[{\rm (b)}] The genus-5 curve $y^2 = x^{11} + x$ is superspecial if and only if $p \equiv 11,19 \pmod{20}$.
\item[{\rm (c)}] The genus-5 curve $y^2 = x^{12} + 1$ is superspecial if and only if $p \equiv 11 \pmod{12}$.\vspace{-2mm}
\end{enumerate}
\begin{proof}
The assertions (a) and (b) are immediate consequences of \cite[Theorem 2]{Valentini} with $g=5$, and the assertion (c) follows from \cite[Example 2.9]{Ohashi-RIMS}.
\end{proof}
\end{proposition}

In this paper, we will investigate the superspeciality of hyperelliptic genus-5 curves defined by the equation\vspace{-1.8mm}
\begin{equation}\label{eq:Habc}
    H_{a,b,c}: y^2 = (x^4+ax^2+1)(x^4+bx^2+1)(x^4+cx^2+1)
\end{equation}
with $a,b,c \notin \{\pm 2\}$ and $a \neq b \neq c \neq a$.
Remark that these conditions on $a,b$, and $c$ are necessary and sufficient for $H_{a,b,c}$ to be non-singular.
Then, one can easily check that the curve $H_{a,b,c}$ has three order-2 automorphisms\vspace{-2.2mm}
\begin{align*}
    \begin{split}
    \iota: (x,y) &\longmapsto (x,-y),\\[-0.3mm]
    \sigma: (x,y) &\longmapsto (-x,y),\\[-0.6mm]
    \tau: (x,y) &\longmapsto (1/x,y/x^6),\\[-1.4mm]
    \end{split}
\end{align*}
where $\iota$ is the hyperelliptic involution, and therefore ${\mathrm{Aut}}(H_{a,b,c})$ contains a subgroup isomorphic to $\mathbf{C}_2^3$.
According to Proposition \ref{prp:classification}, such a curve is of one of the following seven types: Type {\bf 4-1},\,{\bf 7},\,{\bf 9},\,{\bf 10},\,{\bf 11},\,{\bf 12}, or {\bf 15}.
The diagram below illustrates inclusion relations among the automorphism groups of these types:\vspace{-3.5mm}
\begin{figure}[htbp]
    \centering
    \begin{tikzcd}[column sep = small]
        & & \mathbf{C}_2^3 \arrow[dl,dash] \arrow[d,dash] \arrow[dr,dash] & & & & \textsf{3-parameters}\\[2.5mm]
        & \mathbf{C}_2^2 \rtimes \mathbf{C}_4 \arrow[d,dash] \arrow[drr,dash] & \mathbf{C}_2 \times \mathbf{D}_{12} \arrow[dr,dash] & \mathbf{C}_2 \times \mathbf{A}_4 \arrow[dll,dash] \arrow[dl,dash] & & & \hspace{-1.5mm}\textsf{1-parameter}\\[-1mm]
        & \mathbf{A}_4 \rtimes \mathbf{C}_4  & \mathbf{C}_2 \times \mathbf{A}_5 & \mathbf{D}_{12} \rtimes \mathbf{C}_4 & & & \textsf{No parameter}
    \end{tikzcd}
\end{figure}\vspace{-3.7mm}

\noindent The next section is devoted to proving general properties of curves of the form \eqref{eq:Habc}, while curves of more special types are treated individually in Section \ref{sec4}.

\section{ General properties of our curves}\label{sec3}
As in the previous section, let $k$ be an algebraically closed field of characteristic $p \geq 0$ where $p \notin \{2,3,5,11\}$.
In this section, we consider a hyperelliptic genus-5 curve $H_{a,b,c}$ as in equation \eqref{eq:Habc} for distinct $a,b,c \in k \!\smallsetminus\! \{\pm 2\}$.
We first provide a sufficient condition for another curve $H_{a',b',c'}$ to be isomorphic to $H_{a,b,c}$.\vspace{-4.5mm}
\begin{proposition}\label{prp:isomof4-1}
The two curves $H_{a,b,c}$ and $H_{a',b',c'}$ are isomorphic to each other 
if one of the following conditions holds:\vspace{-0.5mm}
\begin{enumerate}
\item[{\rm 1.}] $\{a',b',c'\} = \{a,b,c\}$,\vspace{0.2mm}
\item[{\rm 2.}] $\{a',b',c'\} = \{-a,-b,-c\}$,\vspace{0.2mm}
\item[{\rm 3.}] $\{a',b',c'\} = \{\frac{12-2a}{2+a},\frac{12-2b}{2+b},\frac{12-2c}{2+c}\}$,\vspace{-0.3mm}
\item[{\rm 4.}] $\{a',b',c'\} = \{-\frac{12-2a}{2+a},-\frac{12-2b}{2+b},-\frac{12-2c}{2+c}\}$,\vspace{-0.3mm}
\item[{\rm 5.}] $\{a',b',c'\} = \{\frac{12+2a}{2-a},\frac{12+2b}{2-b},\frac{12+2c}{2-c}\}$,\vspace{-0.3mm}
\item[{\rm 6.}] $\{a',b',c'\} = \{-\frac{12+2a}{2-a},-\frac{12+2b}{2-b},-\frac{12+2c}{2-c}\}$.\vspace{-2.7mm}
\end{enumerate}
\begin{proof}
We note that a permutation of $(a,b,c)$ clearly induces an isomorphism.
For each of the six cases, we explicitly construct an isomorphism $\psi: H_{a',b',c'} \rightarrow H_{a,b,c}$.
Here, we denote by $i$ an element of $k$ such that $i^2=-1$.
\begin{enumerate}
\item In this case, there exists an isomorphism $\psi: (x,y) \longmapsto (x,y)$.
\item In this case, there exists an isomorphism $\psi: (x,y) \longmapsto (ix,y)$.
\item In this case, there exists an isomorphism\vspace{-2mm}
\[
    \psi: (x,y) \longmapsto \biggl(\frac{x+1}{x-1},\frac{\kappa y}{(x-1)^6}\biggr) \,\text{ with }\, \kappa^2 = (2+a)(2+b)(2+c).\vspace{-1.8mm}
\]
\item In this case, there exists an isomorphism\vspace{-2mm}
\[
    \psi: (x,y) \longmapsto \biggl(\frac{x+i}{x-i},\frac{\kappa y}{(x-i)^6}\biggr) \,\text{ with }\, \kappa^2 = (2+a)(2+b)(2+c).\vspace{-1.8mm}
\]
\item In this case, there exists an isomorphism\vspace{-2mm}
\[
    \psi: (x,y) \longmapsto \biggl(i\frac{x+1}{x-1},\frac{\kappa y}{(x-1)^6}\biggr) \,\text{ with }\, \kappa^2 = (2-a)(2-b)(2-c).\vspace{-1.8mm}
\]
\item In this case, there exists an isomorphism\vspace{-2mm}
\[
    \psi: (x,y) \longmapsto \biggl(i\frac{x+i}{x-i},\frac{\kappa y}{(x-i)^6}\biggr) \,\text{ with }\, \kappa^2 = (2-a)(2-b)(2-c).\vspace{-1.3mm}
\]
\end{enumerate}
This completes the proof.
\end{proof}
\end{proposition}\vspace{-1.5mm}

By explicitly describing the decomposition of the Jacobian of $H_{a,b,c}$, we obtain the following result:\vspace{-4.7mm}
\begin{theorem}\label{thm:jacof4-1}
The Jacobian of $H_{a,b,c}$ admits a decomposition
into the product $E_1 \times E_2 \times E_3 \times {\rm Jac}(C)$, where $E_1,E_2,E_3$ are the elliptic curves defined by the equations\vspace{0.9mm}
\begin{align}\label{eq:ellof4-1}
    \begin{split}
    E_1: & \ Y^2 = (X-\lambda)(X-\mu)(x-\nu),\\[-1.2mm]
    E_2: & \ Y^2 = X(X-\lambda)(X-\mu)(x-\nu),\\[-1mm]
    E_3: & \ Y^2 = (X-1)(X-\lambda)(X-\mu)(x-\nu)\\[1.6mm]
    \end{split}
\end{align}
and $C$ is the genus-2 curve defined by the equation\vspace{1.3mm}
\[
    C: Y^2 = X(X-1)(X-\lambda)(X-\mu)(x-\nu),\vspace{1.2mm}
\]
with\vspace{-0.6mm}
\begin{equation}\label{eq:lmn}
    \lambda \coloneqq -\frac{2+a}{2-a}, \quad \mu \coloneqq -\frac{2+b}{2-b},\ \text{ and }\ \,\nu \coloneqq -\frac{2+c}{2-c}.\vspace{1.9mm}
\end{equation}
Note that $\lambda$, $\mu$, and $\nu$ are distinct from each other and from $0$ and $1$.\vspace{-1.4mm}
\begin{proof}
The defining equation of $H_{a,b,c}$ can be rewritten as\vspace{1.1mm}
\[
    H_{a,b,c}: y^2 = (x^2-\alpha^2)(x^2-1/\alpha^2)(x^2-\beta^2)(x^2-1/\beta^2)(x^2-\gamma^2)(x^2-1/\gamma^2)\vspace{2.3mm}
\]
where $\alpha,\beta,\gamma \in k$ with $\alpha^2+1/\alpha^2 = -a,\ \beta^2+1/\beta^2 = -b,\ \gamma^2+1/\gamma^2 = -c$.
Then, the quotient curve $H_1 \coloneqq H_{a,b,c}/\langle\sigma\rangle$ is given by\vspace{0.9mm}
\[
    H_1: v^2 = (u-\alpha^2)(u-1/\alpha^2)(u-\beta^2)(u-1/\beta^2)(u-\gamma^2)(u-1/\gamma^2)\vspace{2.6mm}
\]
via $u = x^2$ and $v = y$.
Also, the quotient curve $H_2 \coloneqq H_{a,b,c}/\langle\sigma \circ \iota\rangle$ is given by\vspace{1mm}
\[
    H_2: v^2 = u(u-\alpha^2)(u-1/\alpha^2)(u-\beta^2)(u-1/\beta^2)(u-\gamma^2)(u-1/\gamma^2)\vspace{2.6mm}
\]
via $u = x^2$ and $v = xy$.
Moreover, it follows from \cite[Theorem 1]{KT} that the Jacobian of $H_{a,b,c}$ is Richelot isogenous to the product of the Jacobian of $H_1$ and the Jacobian of $H_2$.
\newpage
Next, the Möbius transformation $u \mapsto\hspace{-0.1mm} \frac{u-1}{u+1} \eqqcolon X$ gives the correspondences
\begin{alignat*}{3}
    0 &\longmapsto -1, &\quad \infty &\longmapsto 1,\\[-0.9mm]
    \alpha^2 &\longmapsto \frac{\alpha^2-1}{\alpha^2+1} \eqqcolon \alpha', &\quad 1/\alpha^2 &\longmapsto \frac{1-\alpha^2}{1+\alpha^2} = -\alpha'\\[-0.7mm]
    \beta^2 &\longmapsto \frac{\beta^2-1}{\beta^2+1} \eqqcolon \beta', &\quad 1/\beta^2 &\longmapsto \frac{1-\beta^2}{1+\beta^2} = -\beta',\\[-0.7mm]
    \gamma^2 &\longmapsto \frac{\gamma^2-1}{\gamma^2+1} \eqqcolon \gamma', &\quad 1/\gamma^2 &\longmapsto \frac{1-\gamma^2}{1+\gamma^2} = -\gamma'.\\[-4.2mm]
\end{alignat*}
Hence, the curves $H_1$ and $H_2$ are isomorphic to\vspace{1mm}
\begin{align*}
    H_1: & \ Y^2 = (X^2-\alpha'^2)(X^2-\beta'^2)(X^2-\gamma'^2),\\[-1.3mm]
    H_2: & \ Y^2 = (X^2-1)(X^2-\alpha'^2)(X^2-\beta'^2)(X^2-\gamma'^2),\\[-4mm]
\end{align*}
respectively.
Applying \cite[Theorem 1]{KT} again, the Jacobian of $H_1$ is Richelot isogenous to the product of two elliptic curves\vspace{0.5mm}
\begin{align*}
    E_1: & \ Y^2 = (X-\alpha'^2)(X-\beta'^2)(X-\gamma'^2),\\[-1.3mm]
    E_2: & \ Y^2 = X(X-\alpha'^2)(X-\beta'^2)(X-\gamma'^2).\\[-3.7mm]
\end{align*}
Also, the Jacobian of $H_2$ is Richelot isogenous to the product of the elliptic curve\vspace{0.7mm}
\[
    E_3: Y^2 = (X-1)(X-\alpha'^2)(X-\beta'^2)(X-\gamma'^2)\vspace{1.3mm}
\]
and the Jacobian of the genus-2 curve\vspace{0.8mm}
\[
    C: Y^2 = X(X-1)(X-\alpha'^2)(X-\beta'^2)(X-\gamma'^2).\vspace{1.4mm}
\]
Hence, we obtain the Jacobian decomposition ${\rm Jac}(H_{a,b,c}) \sim E_1 \times E_2 \times E_3 \times {\rm Jac}(C)$.
Finally, we define the values $\lambda,\mu,\nu \notin \{0,1\}$ such that\vspace{0.6mm}
\begin{alignat*}{5}
    \lambda &\coloneqq \alpha'^2 &&= \biggl(\frac{\alpha^2-1}{\alpha^2+1}\biggr)^{\!2} &&= \frac{\alpha^2-2+1/\alpha^2}{\alpha^2+2+1/\alpha^2} &&= -\frac{2+a}{2-a},\\[-0.5mm]
    \mu &\coloneqq \beta'^2 &&= \biggl(\frac{\beta^2-1}{\beta^2+1}\biggr)^{\!2} &&= \frac{\beta^2-2+1/\beta^2}{\beta^2+2+1/\beta^2} &&= -\frac{2+b}{2-b},\\[-0.5mm]
    \nu &\coloneqq \gamma'^2 &&= \biggl(\frac{\gamma^2-1}{\gamma^2+1}\biggr)^{\!2} &&= \frac{\gamma^2-2+1/\gamma^2}{\gamma^2+2+1/\gamma^2} &&= -\frac{2+c}{2-c},\\[-4.5mm]
\end{alignat*}
which completes the proof.
\end{proof}
\end{theorem}\vspace{-2mm}

The following corollary plays a central role in our algorithm (Algorithm \ref{alg:type4-1}) which will be described in Section \ref{sec5}.\vspace{-4.8mm}
\begin{corollary}
The curve $H_{a,b,c}$ is superspecial if and only if $E_1,E_2,E_3$ are all supersingular and $C$ is superspecial.
Moreover, if $H_{a,b,c}$ is superspecial, then $a,b,c$ all lie in
$\mathbb{F}_{p^2}$.\vspace{-1.2mm}
\begin{proof}
It follows from the proof of Theorem \ref{thm:jacof4-1} that there exists an isogeny\vspace{0.9mm}
\[
    \phi: {\rm Jac}(H_{a,b,c}) \longrightarrow E_1 \times E_2 \times E_3 \times {\rm Jac}(C)\vspace{2.2mm}
\]
whose degree is a power of $2$, which implies the former assertion.
Moreover, the latter assertion follows from the superspeciality of $C: y^2 = x(x-1)(x-\lambda)(x-\mu)(x-\nu)$.
Indeed, it follows from \cite[Main Theorem A]{Ohashi} that\vspace{0.6mm}
\[
    1-\lambda = \frac{4}{2-a},\ \ 1-\mu = \frac{4}{2-b},\ \ 1-\nu = \frac{4}{2-c}\vspace{1.3mm}
\]
all belong to $\mathbb{F}_{p^2}$.
This implies that $a,b,c$ also belong to $\mathbb{F}_{p^2}$, as desired.
\end{proof}
\end{corollary}

\section{ Detailed analysis for individual curve types}\label{sec4}
This section provides a more detailed analysis of each type classified in the latter part of Section \ref{sec2}.
More specifically, the following subsections describe several properties of hyperelliptic genus-5 curves $H$ that satisfy the following conditions.\vspace{-0.1mm}
\begin{itemize}
\item Subsection 4.1:\ \,The case where ${\rm Aut}(H) \cong \mathbf{C}_2^3$, that is, $H$ is of Type {\bf 4-1}.\vspace{0.2mm}
\item Subsection 4.2:\ \,The case where ${\rm Aut}(H) \supset \mathbf{C}_2^2 \rtimes \mathbf{C}_4$.\vspace{0.2mm}
\item Subsection 4.3:\ \,The case where ${\rm Aut}(H) \supset \mathbf{C}_2 \times \mathbf{D}_{12}$.\vspace{0.2mm}
\item Subsection 4.4:\ \,The case where ${\rm Aut}(H) \supset \mathbf{C}_2 \times \mathbf{A}_4$.
\end{itemize}
In what follows, we say that the first case is the \emph{generic case}, and the other three cases are \emph{special cases}.
The main goals of this section are to provide a criterion for deciding whether two curves are isomorphic or not in the generic case, and to show that in each\\special case the Jacobian of $H$ is completely decomposable (i.e., isogenous to a product of five elliptic curves).
These results will be employed in our algorithms of Section \ref{sec5} to enumerate superspecial curves in each case.

\subsection{ Generic case: ${\rm Aut}(H) \cong \mathbf{C}_2^3$}\label{sec4-1}
The aim of this subsection is to prove Theorem \ref{thm:isomof4-1} below, which says that the converse of Proposition \ref{prp:isomof4-1} holds, under the assumption that the automorphism groups of $H_{a,b,c}$ and $H_{a',b',c'}\hspace{-0.1mm}$ are isomorphic to $\mathbf{C}_2^3$.
Under this assumption, the reduced automorphism groups of these curves are explicitly given by\vspace{-2mm}
\begin{alignat*}{2}
    \overline{{\rm Aut}}(H_{a,b,c}) &= \{{\rm id},\sigma,\tau,\sigma \hspace{-0.2mm}\circ \tau\}, \quad && \text{with }\,\sigma(x) = -x,\ \,\tau(x) = 1/x,\\[-0.8mm]
    \overline{{\rm Aut}}(H_{a',b',c'}) &= \{{\rm id},\sigma',\tau',\sigma' \hspace{-0.2mm}\circ \tau'\}, \quad && \text{with }\,\sigma'(x) = -x,\ \,\tau'(x) = 1/x,\\[-6.6mm]
\end{alignat*}
as already described in the latter part of Section~\ref{sec2}.
As a preparation for the proof, we first prove the following fundamental lemma:\vspace{-4.7mm}
\begin{lemma}\label{lem:scalar}
Let $s_1,s_2,s_3,t_1,t_2,t_3 \in k^*\hspace{-0.5mm}$ such that\vspace{0.2mm}
\[
    \#\{\pm s_1,\pm 1/s_1,\pm s_2,\pm 1/s_2,\pm s_3,\pm 1/s_3\} = \#\{\pm t_1,\pm 1/t_1,\pm t_2,\pm 1/t_2,\pm t_3,\pm 1/t_3\} = 12.\vspace{1.3mm}
\]
If there exists $\lambda \in k^*\hspace{-0.5mm}$ such that\vspace{0.2mm}
\[
    \{\pm \lambda s_1,\pm\lambda/s_1,\pm\lambda s_2,\pm\lambda/s_2,\pm\lambda s_3,\pm\lambda/s_3\} = \{\pm t_1,\pm 1/t_1,\pm t_2,\pm 1/t_2,\pm t_3,\pm 1/t_3\},\vspace{1.3mm}
\]
then $\lambda^4 = 1$.\vspace{-1mm}
\begin{proof}
By squaring each element of the sets on both sides, we obtain the equality\vspace{0.7mm}
\begin{equation}\label{eq:square-set}
    \{\lambda^2 s_1^2,\lambda^2/s_1^2,\lambda^2 s_2^2,\lambda^2/s_2^2,\lambda^2 s_3^2,\lambda^2/s_3^2\}
    =
    \{t_1^2,1/t_1^2,t_2^2,1/t_2^2,t_3^2,1/t_3^2\}\vspace{1.3mm}.
\end{equation}
Comparing the degree-$2$ elementary symmetric polynomials of the sets on both sides of \eqref{eq:square-set}, we obtain the equality\vspace{0.5mm}
\begin{equation}\label{eq:deg2-sym}
    \lambda^4 (s_1^2+1/s_1^2)(s_2^2+1/s_2^2)(s_3^2+1/s_3^2)
    =
    (t_1^2+1/t_1^2)(t_2^2+1/t_2^2)(t_3^2+1/t_3^2).\vspace{1.4mm}
\end{equation}
Similarly, comparing the degree-$4$ elementary symmetric polynomials of the sets on both sides of \eqref{eq:square-set}, we obtain the equality\vspace{0.5mm}
\begin{equation}\label{eq:deg4-sym}
    \lambda^8 (s_1^2+1/s_1^2)(s_2^2+1/s_2^2)(s_3^2+1/s_3^2)
    =
    (t_1^2+1/t_1^2)(t_2^2+1/t_2^2)(t_3^2+1/t_3^2).\vspace{1.4mm}
\end{equation}
Comparing the equalities \eqref{eq:deg2-sym} and \eqref{eq:deg4-sym}, we immediately conclude that $\lambda^4 = 1$.
\end{proof}
\end{lemma}

We suppose that an isomorphism $\psi: H_{a,b,c} \rightarrow H_{a',b',c'}$ exists, and denote by $\psi$ the induced automorphism of $\mathbb{P}^1\hspace{-0.2mm}$ by abuse of notation.
Since $\psi \circ \sigma \circ \psi^{-1} \in \overline{{\rm Aut}}(H_{a',b',c'})$ has order 2, one of the following conditions must hold:\vspace{-0.7mm}
\begin{enumerate}
\item[(i)] $\psi \circ \sigma \circ \psi^{-1} = \sigma'$, that is, $\psi(-x) = -\psi(x)$ for all $x \in \mathbb{P}^1$.
\item[(ii)] $\psi \circ \sigma \circ \psi^{-1} = \tau'$, that is, $\psi(-x) = 1/\psi(x)$ for all $x \in \mathbb{P}^1$.
\item[(iii)] $\psi \circ \sigma \circ \psi^{-1} = \sigma' \hspace{-0.2mm}\circ \tau'$, that is, $\psi(-x) = -1/\psi(x)$ for all $x \in \mathbb{P}^1$.\vspace{0.4mm}
\end{enumerate}
In what follows, we prove lemmas by considering each case separately.
For the proofs, we rewrite the defining polynomials of $H_{a,b,c}$ and $H_{a',b',c'}$ as\vspace{-1.8mm}
\begin{align*}
    H_{a,b,c}: y^2 &= (x^2-\alpha^2)(x^2-1/\alpha^2)(x^2-\beta^2)(x^2-1/\beta^2)(x^2-\gamma^2)(x^2-1/\gamma^2),\\[-1mm]
    H_{a',b',c'}: y^2 &= (x^2-\alpha'^2)(x^2-1/\alpha'^2)(x^2-\beta'^2)(x^2-1/\beta'^2)(x^2-\gamma'^2)(x^2-1/\gamma'^2),\\[-6.1mm]
\end{align*}
where $\alpha,\beta,\gamma,\alpha',\beta',\gamma' \in k^*\hspace{-0.3mm}$ with $\alpha^2+1/\alpha^2 = -a,\ \beta^2+1/\beta^2 = -b,\ \gamma^2+1/\gamma^2 = -c$, and so on.\vspace{-4.7mm}
\begin{lemma}\label{lem:case1}
If $\psi(-x) = -\psi(x)$ for all $x \in \mathbb{P}^1$, then $\{a',b',c'\} = \{a,b,c\}$ or $\{-a,-b,-c\}$.\vspace{-0.6mm}
\begin{proof}
By the assumption, the isomorphism $\psi$ on $\mathbb{P}^1$ must be of the form\vspace{0.3mm}
\[
    \psi(x) = \lambda x\ \text{ or }\ \psi(x) = \lambda/x\vspace{1.2mm}
\]
for some $\lambda \neq 0$.
Since $\psi$ maps the set of branch points of $H_{a,b,c}$ to that of $H_{a',b',c'}$, we have\vspace{0.4mm}
\begin{align*}
    \{\pm\lambda\alpha,\pm\lambda/\alpha,\pm\lambda\beta,\pm\lambda/\beta,\pm\lambda\gamma,\pm\lambda/\gamma\} &= \psi(\{\pm\alpha,\pm1/\alpha,\pm\beta,\pm1/\beta,\pm\gamma,\pm1/\gamma\})\\[-1.2mm]
    &=\{\pm\alpha',\pm1/\alpha',\pm\beta',\pm1/\beta',\pm\gamma',\pm1/\gamma'\}.\\[-4.6mm]
\end{align*}
It then follows from Lemma~\ref{lem:scalar} that $\lambda^4 = 1$.
Moreover, squaring each element of the sets on both sides yields\vspace{0.4mm}
\begin{equation}\label{eq:case1}
  \{\lambda^2\alpha^2,\lambda^2/\alpha^2,\lambda^2\beta^2,\lambda^2/\beta^2,\lambda^2\gamma^2,\lambda^2/\gamma^2\} = \{\alpha'^2,1/\alpha'^2,\beta'^2,1/\beta'^2,\gamma'^2,1/\gamma'^2\}.\vspace{1.1mm}
\end{equation}
We now consider two cases depending on whether $\lambda^2 = 1$ or
$\lambda^2 = -1$.\vspace{-0.9mm}
\begin{itemize}
\item If $\lambda^2 = 1$, then the sets on both sides of \eqref{eq:case1} can be partitioned into\vspace{-2.1mm}
\begin{align*}
    \{\{\alpha^2,1/\alpha^2\},\{\beta^2,1/\beta^2\},&\hspace{0.5mm}\{\gamma^2,1/\gamma^2\}\}\\[-0.5mm]
    &= \{\{\alpha'^2,1/\alpha'^2\},\{\beta'^2,1/\beta'^2\},\{\gamma'^2,1/\gamma'^2\}\}.\\[-6.2mm]
\end{align*}
Hence, we obtain\vspace{-2.6mm}
\begin{align*}
    \{a',b',c'\} &= \{-(\alpha'^2+1/\alpha'^2),-(\beta'^2+1/\beta'^2),-(\gamma'^2+1/\gamma'^2)\}\\[-1mm]
    &= \{-(\alpha^2+1/\alpha^2),-(\beta^2+1/\beta^2),-(\gamma^2+1/\gamma^2)\} = \{a,b,c\}.\\[-6.6mm]
\end{align*}
\item If $\lambda^2 = -1$, then the sets on both sides of
\eqref{eq:case1} can be partitioned as\vspace{-2.2mm}
\begin{align*}
    \{\{-\alpha^2,-1/\alpha^2\},\{-\beta^2,-1/\beta^2\},&\hspace{0.5mm}\{-\gamma^2,-1/\gamma^2\}\}\\[-0.5mm]
    &= \{\{\alpha'^2,1/\alpha'^2\},\{\beta'^2,1/\beta'^2\},\{\gamma'^2,1/\gamma'^2\}\}.\\[-6.2mm]
\end{align*}
Hence, we obtain\vspace{-2.6mm}
\begin{align*}
    \{a',b',c'\} &= \{-(\alpha'^2+1/\alpha'^2),-(\beta'^2+1/\beta'^2),-(\gamma'^2+1/\gamma'^2)\}\\[-1mm]
    &= \{\alpha^2+1/\alpha^2,\beta^2+1/\beta^2,\gamma^2+1/\gamma^2\} = \{-a,-b,-c\}\\[-6.6mm]
\end{align*}
\end{itemize}
Therefore, the proof is completed.
\end{proof}
\end{lemma}

\begin{lemma}\label{lem:case2}
If $\psi(-x) = 1/\psi(x)$ for all $x \in \mathbb{P}^1$, then\vspace{0.7mm}
\[
    \{a',b',c'\} = \biggl\{\frac{12-2a}{2+a},\frac{12-2b}{2+b},\frac{12-2c}{2+c}\biggr\} \,\text{ or }\, \biggl\{\frac{12+2a}{2-a},\frac{12+2b}{2-b},\frac{12+2c}{2-c}\biggr\}.\vspace{-2.8mm}
\]
\begin{proof}
By the assumption, the isomorphism $\psi$ on $\mathbb{P}^1$ must be of the form\vspace{0.2mm}
\begin{equation}\label{eq:psi-case2}
    \psi(x) = \frac{x-\lambda}{x+\lambda} \quad \text{or} \quad \psi(x) = -\frac{x-\lambda}{x+\lambda}\vspace{0.7mm}
\end{equation}
for some $\lambda \neq 0$.
For the former isomorphism $\psi$, we obtain\vspace{0.3mm}
\begin{align}\label{eq:case2}
    \biggl\{
    \frac{\alpha\mp\lambda}{\alpha\pm\lambda},\frac{1\mp\lambda\alpha}{1\pm\lambda\alpha},
    \frac{\beta\mp\lambda}{\beta\pm\lambda},\frac{1\mp\lambda\beta}{1\pm\lambda\beta},
    \frac{\gamma\mp\lambda}{\gamma\pm\lambda},&\frac{1\mp\lambda\gamma}{1\pm\lambda\gamma}
    \biggr\}\nonumber\\[-0.3mm] &= \psi(\{\pm\alpha,\pm1/\alpha,\pm\beta,\pm1/\beta,\pm\gamma,\pm1/\gamma\})\nonumber\\[-0.7mm]
    &= \{\pm\alpha',\pm1/\alpha',\pm\beta',\pm1/\beta',\pm\gamma',\pm1/\gamma'\}.\\[-5.4mm]\nonumber
\end{align}
Here, the Möbius transformation $x \mapsto \frac{x-1}{x+1}$ yields\vspace{-0.5mm}
\begin{align*}
    \{\pm\lambda\alpha,\pm\lambda/\alpha,\pm\lambda\beta,&\pm\lambda/\beta,\pm\lambda\gamma,\pm\lambda/\gamma\}\\[-0.3mm]
    &= \left\{\pm\frac{\alpha'-1}{\alpha'+1},\pm\frac{\alpha'+1}{\alpha'-1},\pm\frac{\beta'-1}{\beta'+1},\pm\frac{\beta'+1}{\beta'-1},\pm\frac{\gamma'-1}{\gamma'+1},\pm\frac{\gamma'+1}{\gamma'-1}\right\}.\\[-5.7mm]
\end{align*}
Then, it follows from Lemma \ref{lem:scalar} that $\lambda^4=1$.
In the following, let us consider separate cases depending on whether $\lambda^2 = 1$ or $\lambda^2 = -1$.\vspace{-1mm}
\begin{itemize}
\item If $\lambda^2 = 1$, squaring each element of the sets on both sides of \eqref{eq:case2} yields\vspace{-2.2mm}
\begin{align*}
    \biggl\{
    \!\left\{\Bigl(\frac{\alpha+1}{\alpha-1}\Bigr)^{\!2},\Bigl(\frac{\alpha-1}{\alpha+1}\Bigr)^{\!2}\right\},
    \left\{\Bigl(\frac{\beta+1}{\beta-1}\Bigr)^{\!2},\Bigl(\frac{\beta-1}{\beta+1}\Bigr)^{\!2}\right\},
    \left\{\Bigl(\frac{\gamma+1}{\gamma-1}\Bigr)^{\!2},\Bigl(\frac{\gamma-1}{\gamma+1}\Bigr)^{\!2}\right\}\!\biggr\} \\[-0.5mm]= \{\{\alpha'^2,1/\alpha'^2\},\{\beta'^2,1/\beta'^2\},\{\gamma'^2,1/\gamma'^2\}\}.\\[-7.2mm]
\end{align*}
Hence, we obtain\vspace{-2.7mm}
\begin{align*}
    \{a',b',c'\} &= \{-(\alpha'^2+1/\alpha'^2),-(\beta'^2+1/\beta'^2),-(\gamma'^2+1/\gamma'^2)\}\\[-0.5mm]
    &= \left\{-\Bigl(\frac{\alpha-1}{\alpha+1}\Bigr)^{\!\!2}\!-\Bigl(\frac{\alpha+1}{\alpha-1}\Bigr)^{\!\!2},-\Bigl(\frac{\beta-1}{\beta+1}\Bigr)^{\!\!2}\!-\Bigl(\frac{\beta+1}{\beta-1}\Bigr)^{\!\!2},-\Bigl(\frac{\gamma-1}{\gamma+1}\Bigr)^{\!\!2}\!-\Bigl(\frac{\gamma+1}{\gamma-1}\Bigr)^{\!\!2}\right\}\\[-0.7mm]
    &= \biggl\{\frac{12-2a}{2+a},\frac{12-2b}{2+b},\frac{12-2c}{2+c}\biggr\}.\\[-7.8mm]
\end{align*}
\item If $\lambda^2 = -1$, squaring each element of the sets on both sides of \eqref{eq:case2} yields\vspace{-2.2mm}
\begin{align*}
    \biggl\{
    \!\left\{\Bigl(\frac{\alpha+i}{\alpha-i}\Bigr)^{\!2},\Bigl(\frac{\alpha-i}{\alpha+i}\Bigr)^{\!2}\right\},
    \left\{\Bigl(\frac{\beta+i}{\beta-i}\Bigr)^{\!2},\Bigl(\frac{\beta-i}{\beta+i}\Bigr)^{\!2}\right\},
    \left\{\Bigl(\frac{\gamma+i}{\gamma-i}\Bigr)^{\!2},\Bigl(\frac{\gamma-i}{\gamma+i}\Bigr)^{\!2}\right\}\!\biggr\} \\= \{\{\alpha'^2,1/\alpha'^2\},\{\beta'^2,1/\beta'^2\},\{\gamma'^2,1/\gamma'^2\}\}.\\[-7.2mm]
\end{align*}
Hence, we obtain\vspace{-2.7mm}
\begin{align*}
    \{a',b',c'\} &= \{-(\alpha'^2+1/\alpha'^2),-(\beta'^2+1/\beta'^2),-(\gamma'^2+1/\gamma'^2)\}\\[-0.5mm]
    &= \left\{-\Bigl(\frac{\alpha-i}{\alpha+i}\Bigr)^{\!\!2}\!-\Bigl(\frac{\alpha+i}{\alpha-i}\Bigr)^{\!\!2},-\Bigl(\frac{\beta-i}{\beta+i}\Bigr)^{\!\!2}\!-\Bigl(\frac{\beta+i}{\beta-i}\Bigr)^{\!\!2},-\Bigl(\frac{\gamma-i}{\gamma+i}\Bigr)^{\!\!2}\!-\Bigl(\frac{\gamma+i}{\gamma-i}\Bigr)^{\!\!2}\right\}\\[-0.5mm]
    &= \biggl\{\frac{12+2a}{2-a},\frac{12+2b}{2-b},\frac{12+2c}{2-c}\biggr\}.\\[-7.6mm]
\end{align*}
\end{itemize}
The claim can also be verified in a similar way for the latter isomorphism $\psi$ in \eqref{eq:psi-case2}.
Therefore, the proof is completed.
\end{proof}
\end{lemma}

\begin{lemma}\label{lem:case3}
If $\psi(-x) = -1/\psi(x)$ for all $x \in \mathbb{P}^1$, then\vspace{0.7mm}
\[
    \{a',b',c'\} = \biggl\{-\frac{12-2a}{2+a},-\frac{12-2b}{2+b},-\frac{12-2c}{2+c}\biggr\} \,\text{ or }\, \biggl\{-\frac{12+2a}{2-a},-\frac{12+2b}{2-b},-\frac{12+2c}{2-c}\biggr\}.\vspace{-2.8mm}
\]
\begin{proof}
By the assumption, the isomorphism $\psi$ on $\mathbb{P}^1$ must be of the form\vspace{0.2mm}
\begin{equation}\label{eq:psi-case3}
    \psi(x) = i\frac{x-\lambda}{x+\lambda} \quad \text{or} \quad \psi(x) = -i\frac{x-\lambda}{x+\lambda}\vspace{0.7mm}
\end{equation}
for some $\lambda \neq 0$.
For the former isomorphism $\psi$, we obtain\vspace{0.3mm}
\begin{align}\label{eq:case3}
    \biggl\{
    i\frac{\alpha\mp\lambda}{\alpha\pm\lambda},i\frac{1\mp\lambda\alpha}{1\pm\lambda\alpha},
    i\frac{\beta\mp\lambda}{\beta\pm\lambda},i\frac{1\mp\lambda\beta}{1\pm\lambda\beta},
    &i\frac{\gamma\mp\lambda}{\gamma\pm\lambda},i\frac{1\mp\lambda\gamma}{1\pm\lambda\gamma}
    \biggr\}\nonumber\\[-0.3mm] &= \psi(\{\pm\alpha,\pm1/\alpha,\pm\beta,\pm1/\beta,\pm\gamma,\pm1/\gamma\})\nonumber\\[-0.5mm]
    &= \{\pm\alpha',\pm1/\alpha',\pm\beta',\pm1/\beta',\pm\gamma',\pm1/\gamma'\}.\\[-5.4mm]\nonumber
\end{align}
Here, the Möbius transformation $x \mapsto \frac{x-i}{x+i}$ yields\vspace{-0.5mm}
\begin{align*}
    \{\pm\lambda\alpha,\pm\lambda/\alpha,\pm\lambda\beta,&\pm\lambda/\beta,\pm\lambda\gamma,\pm\lambda/\gamma\}\\[-0.3mm]
    &= \left\{\pm\frac{\alpha'-i}{\alpha'+i},\pm\frac{\alpha'+i}{\alpha'-i},\pm\frac{\beta'-i}{\beta'+i},\pm\frac{\beta'+i}{\beta'-i},\pm\frac{\gamma'-i}{\gamma'+i},\pm\frac{\gamma'+i}{\gamma'-i}\right\}.\\[-5.7mm]
\end{align*}
Then, it follows from Lemma \ref{lem:scalar} that $\lambda^4=1$.
In the following, let us consider separate cases depending on whether $\lambda^2 = 1$ or $\lambda^2 = -1$.\vspace{-1mm}
\begin{itemize}
\item If $\lambda^2 = 1$, squaring each element of the sets on both sides of \eqref{eq:case3} yields\vspace{-2.2mm}
\begin{align*}
    \biggl\{
    \!\left\{-\Bigl(\frac{\alpha+1}{\alpha-1}\Bigr)^{\!2},-\Bigl(\frac{\alpha-1}{\alpha+1}\Bigr)^{\!2}\right\},
    \left\{-\Bigl(\frac{\beta+1}{\beta-1}\Bigr)^{\!2},-\Bigl(\frac{\beta-1}{\beta+1}\Bigr)^{\!2}\right\},
    \left\{\Bigl(-\frac{\gamma+1}{\gamma-1}\Bigr)^{\!2},\Bigl(-\frac{\gamma-1}{\gamma+1}\Bigr)^{\!2}\right\}\!\biggr\} \\[-0.5mm]= \{\{\alpha'^2,1/\alpha'^2\},\{\beta'^2,1/\beta'^2\},\{\gamma'^2,1/\gamma'^2\}\}.\\[-7.2mm]
\end{align*}
Hence, we obtain\vspace{-2.7mm}
\begin{align*}
    \{a',b',c'\} &= \{-(\alpha'^2+1/\alpha'^2),-(\beta'^2+1/\beta'^2),-(\gamma'^2+1/\gamma'^2)\}\\[-0.5mm]
    &= \left\{\Bigl(\frac{\alpha-1}{\alpha+1}\Bigr)^{\!\!2}\!+\Bigl(\frac{\alpha+1}{\alpha-1}\Bigr)^{\!\!2},\Bigl(\frac{\beta-1}{\beta+1}\Bigr)^{\!\!2}\!+\Bigl(\frac{\beta+1}{\beta-1}\Bigr)^{\!\!2},\Bigl(\frac{\gamma-1}{\gamma+1}\Bigr)^{\!\!2}\!+\Bigl(\frac{\gamma+1}{\gamma-1}\Bigr)^{\!\!2}\right\}\\[-0.5mm]
    &= \biggl\{-\frac{12-2a}{2+a},-\frac{12-2b}{2+b},-\frac{12-2c}{2+c}\biggr\}.\\[-7.8mm]
\end{align*}
\item If $\lambda^2 = -1$, squaring each element of the sets on both sides of \eqref{eq:case3} yields\vspace{-2.2mm}
\begin{align*}
    \biggl\{
    \!\left\{-\Bigl(\frac{\alpha+i}{\alpha-i}\Bigr)^{\!2},-\Bigl(\frac{\alpha-i}{\alpha+i}\Bigr)^{\!2}\right\},
    \left\{-\Bigl(\frac{\beta+i}{\beta-i}\Bigr)^{\!2},-\Bigl(\frac{\beta-i}{\beta+i}\Bigr)^{\!2}\right\},
    \left\{-\Bigl(\frac{\gamma+i}{\gamma-i}\Bigr)^{\!2},-\Bigl(\frac{\gamma-i}{\gamma+i}\Bigr)^{\!2}\right\}\!\biggr\} \\[-0.5mm]= \{\{\alpha'^2,1/\alpha'^2\},\{\beta'^2,1/\beta'^2\},\{\gamma'^2,1/\gamma'^2\}\}.\\[-7.2mm]
\end{align*}
Hence, we obtain\vspace{-2.7mm}
\begin{align*}
    \{a',b',c'\} &= \{-(\alpha'^2+1/\alpha'^2),-(\beta'^2+1/\beta'^2),-(\gamma'^2+1/\gamma'^2)\}\\[-0.5mm]
    &= \left\{\Bigl(\frac{\alpha-i}{\alpha+i}\Bigr)^{\!\!2}\!+\Bigl(\frac{\alpha+i}{\alpha-i}\Bigr)^{\!\!2},\Bigl(\frac{\beta-i}{\beta+i}\Bigr)^{\!\!2}\!+\Bigl(\frac{\beta+i}{\beta-i}\Bigr)^{\!\!2},\Bigl(\frac{\gamma-i}{\gamma+i}\Bigr)^{\!\!2}\!+\Bigl(\frac{\gamma+i}{\gamma-i}\Bigr)^{\!\!2}\right\}\\[-0.5mm]
    &= \biggl\{-\frac{12+2a}{2-a},-\frac{12+2b}{2-b},-\frac{12+2c}{2-c}\biggr\}\\[-7.6mm]
\end{align*}
\end{itemize}
The claim can also be verified in a similar way for the latter isomorphism $\psi$ in \eqref{eq:psi-case3}.
Hence, the proof is completed.
\end{proof}
\end{lemma}

By summarizing the discussion so far, we finally obtain the following theorem:\vspace{-4.5mm}
\begin{theorem}\label{thm:isomof4-1}
Assume that ${\rm Aut}(H_{a,b,c}) \cong \mathbf{C}_2^3$.
Then, the two curves $H_{a,b,c}$ and $H_{a',b',c'}$ are isomorphic to each other if and only if either of the following conditions holds:\vspace{-0.7mm}
\begin{enumerate}
\item $\{a',b',c'\} = \{a,b,c\}$,
\item $\{a',b',c'\} = \{-a,-b,-c\}$,
\item $\{a',b',c'\} = \{\frac{12-2a}{2+a},\frac{12-2b}{2+b},\frac{12-2c}{2+c}\}$,\vspace{-0.3mm}
\item $\{a',b',c'\} = \{-\frac{12-2a}{2+a},-\frac{12-2b}{2+b},-\frac{12-2c}{2+c}\}$,\vspace{-0.3mm}
\item $\{a',b',c'\} = \{\frac{12+2a}{2-a},\frac{12+2b}{2-b},\frac{12+2c}{2-c}\}$,\vspace{-0.3mm}
\item $\{a',b',c'\} = \{-\frac{12+2a}{2-a},-\frac{12+2b}{2-b},-\frac{12+2c}{2-c}\}$.\vspace{-2.7mm}
\end{enumerate}
\begin{proof}
This follows from Proposition \ref{prp:isomof4-1} together with Lemmas \ref{lem:case1}, \ref{lem:case2}, and \ref{lem:case3}.
\end{proof}
\end{theorem}\vspace{-4.5mm}

\subsection{ Special case: ${\rm Aut}(H) \supset \mathbf{C}_2^2 \rtimes \mathbf{C}_4$}
In this subsection, we consider a hyperelliptic genus-5 curve\vspace{-1.9mm}
\begin{equation}\label{eq:type7}
    H: y^2 = (x^4+1)(x^8-Ax^4+1)\vspace{-0.9mm}
\end{equation}
with $A \in k\!\smallsetminus\!\{\pm2\}$, which is clearly isomorphic to the curve $y^2 = (x^4-1)(x^8+Ax^4+1)$ via the transformations $x^4 \mapsto\hspace{-0.2mm} -x^4$ and $y^2 \mapsto\hspace{-0.2mm} -y^2$.
It follows from Table \ref{table:classification} that $H$ is of Type {\bf 7}, \hspace{-0.3mm}{\bf 11}, or {\bf 15} (we note that $H$ is of Type {\bf 11} if $A=34$, and of Type {\bf 15} if $A=1$).
This curve $H$ is isomorphic to $H_{a,b,c}$ defined in \eqref{eq:Habc}, where $a,b$, and $c$ are the distinct roots of the cubic equation $t^3-(2+A)t = 0$.
Indeed, the right-hand side of \eqref{eq:Habc} can be expanded as\vspace{-2.4mm}
\begin{align*}
    \begin{split}
    x^{12} + (a+b+c)x^{10} &+ (ab+bc+ca+3)x^8 + (abc+2a+2b+2c)x^6\\[-1.2mm]
    & + (ab+bc+ca+3)x^4 + (a+b+c)x^2 + 1,
    \end{split}\\[-5.8mm]\nonumber
\end{align*}
which is equal to $x^{12} + (1-A)x^8 + (1-A)x^4 + 1 = (x^4+1)(x^8-Ax^4+1)$, since\vspace{-1.4mm}
\begin{equation}\label{eq:abc-type7}
    a+b+c = 0, \quad ab+bc+ca = -(2+A), \quad abc = 0\vspace{-0.5mm}
\end{equation}
holds from the way $a,b,c$ are chosen.
Therefore, the Jacobian decomposition of $H$ can be described using Theorem \ref{thm:jacof4-1}.\vspace{-4.7mm}
\begin{proposition}\label{prp:jacof7}
The Jacobian of $H$ given in equation \eqref{eq:type7} is decomposed into the product of five elliptic curves $E_1,E_2,E_3,E_4,E_5$, where we define\vspace{0.5mm}
\begin{align*}
    E_1: Y^2 &= (X+1)(X-\delta)(X-1/\delta),\\[-1mm]
    E_2: Y^2 &= X(X+1)(X-\delta)(X-1/\delta),\\[-1mm]
    E_3: Y^2 &= (X-1)(X+1)(X-\delta)(X-1/\delta),\\[-1mm]
    E_4: Y^2 &= X(X-1)\{X+(\delta-(\delta^2-1)^{1/2})^2\},\\[-1mm]
    E_5: Y^2 &= X(X-1)\{X+(\delta+(\delta^2-1)^{1/2})^2\}\\[-4.3mm]
\end{align*}
with\vspace{-0.7mm}
\begin{equation}\label{eq:deltaof7}
    \delta + 1/\delta = -\frac{12+2A}{2-A}.\vspace{1.5mm}
\end{equation}
Therefore, the curve $H$ is superspecial if and only if the five elliptic curves $E_1,E_2,E_3,E_4,E_5$ are all supersingular.
\begin{proof}
For the values $(\lambda,\mu,\nu)$ defined in \eqref{eq:lmn}, it follows from the relation \eqref{eq:abc-type7} that
\begin{align*}
    -(\lambda+\mu+\nu) &= \frac{2+a}{2-a} + \frac{2+b}{2-b} + \frac{2+c}{2-c}\\
    &= \frac{24-4(a+b+c)-2(ab+bc+ca)+3abc}{8-4(a+b+c)+2(ab+bc+ca)-abc} = \frac{14+A}{2-A},\\
    \lambda\mu+\mu\nu+\nu\lambda &= \frac{2+a}{2-a}\cdot \frac{2+b}{2-b} + \frac{2+b}{2-b}\cdot \frac{2+c}{2-c} + \frac{2+c}{2-c}\cdot \frac{2+a}{2-a}\\
    &= \frac{24+4(a+b+c)-2(ab+bc+ca)-3abc}{8-4(a+b+c)+2(ab+bc+ca)-abc} = \frac{14+A}{2-A},\\
    -\lambda\mu\nu &= \frac{2+a}{2-a}\cdot \frac{2+b}{2-b} \cdot \frac{2+c}{2-c}\\
    &= \frac{8+4(a+b+c)+2(ab+bc+ca)+abc}{8-4(a+b+c)+2(ab+bc+ca)-abc} = 1,\\[-5.3mm]
\end{align*}
whence\vspace{-0.7mm}
\begin{align*}
    (X-\lambda)(X-\mu)(X-\nu) &= X^3 - (\lambda+\mu+\nu)X^2 +(\lambda\mu + \mu\nu + \nu\lambda)X - \lambda\mu\nu\\[-0.3mm]
    &= X^3 + \frac{14+A}{2-A}X^2 + \frac{14+A}{2-A} + 1\\
    &= (X+1)\Bigl(X^2 + \frac{12+2A}{2-A}X+1 \Bigr) = (X+1)(X-\delta)(X-1/\delta).
\end{align*}
Therefore, applying Theorem~\ref{thm:jacof4-1}, one can see that the Jacobian of $H$ is decomposed into the product of $E_1,E_2,E_3$ and that of the genus-2 curve\vspace{0.3mm}
\begin{equation}\label{eq:g2-type7}
    C: Y^2 = X(X-1)(X+1)(X-\delta)(X-1/\delta).\vspace{1.4mm}
\end{equation}
This curve corresponds to Class~(3) in \cite[p.\,130]{IKO}, and thus its Jacobian is Richelot isogenous to $E_4 \hspace{0.2mm}\times E_5$ as shown in \cite[Remark 1.4]{IKO}.
From the above discussions, we obtain the Jacobian decomposition ${\rm Jac}(H) \sim E_1 \times E_2 \times E_3 \times E_4 \times E_5$, whose degree is a power of 2.
\end{proof}
\end{proposition}\vspace{-1.5mm}
The $j$-invariant of each elliptic curve in Proposition~\ref{prp:jacof7} can be computed as\vspace{-2.3mm}
\begin{align*}
    \begin{split}
    j(E_1) = j(E_2) &= 256\frac{(\delta^2-\delta+1)^3}{\delta^2(\delta-1)^2} = 16\frac{(A+14)^3}{(A-2)^2}, \quad j(E_3) = 1728, \text{ and }\\
    j(E_4) = j(E_5) &= 64\frac{(2\delta-1)^3(2\delta+1)^3}{\delta^2}.
    \end{split}\\[-6.5mm]
\end{align*}
Since $E_1 \cong\hspace{-0.1mm} E_2$ and $E_4 \cong\hspace{-0.1mm} E_5$, it suffices to check that $E_1,E_3$, and $E_4$ are supersingular in order to show that $H$ is superspecial.
Also, we obtain the following statement:\vspace{-4.5mm}
\begin{corollary}\label{cor:modof7}
If the curve $H\hspace{-0.1mm}$ given in \eqref{eq:type7} is superspecial, then $\hspace{0.3mm}p \equiv 3 \pmod{4}$ must hold.
In particular, the curve $H$ of Type {\bf 11} is superspecial if and only if $p \equiv 11 \pmod{12}$.\vspace{-1mm}
\begin{proof}
The former assertion follows from the well-known fact (cf. \cite[Example\,V\hspace{-0.3mm}.4.5]{Silverman}) that an\\ elliptic curve with $j$-invariant $\hspace{-0.2mm}1728$ is supersingular if and only if $p \equiv 3 \hspace{-0.3mm}\pmod{4}$.
Also, in the case where $A = 34$, we may take $\delta = 1/2$, and a direct computation shows that\vspace{0.8mm}
\[
    j(E_1) = j(E_2) = j(E_3) = 1728 \,\text{ and }\, j(E_4) = j(E_5) = 0.\vspace{2.2mm}
\]
Since it is also well-known (cf. \cite[Example\,V\hspace{-0.3mm}.4.4]{Silverman}) that an elliptic curve with $j$-invariant $0$ is\\ supersingular if and only if $p \equiv 2 \pmod{3}$, we obtain the latter assertion.
\end{proof}
\end{corollary}
The following proposition ensures that superspecial curves $H$ of the form \eqref{eq:type7} can be constructed using only arithmetic operations over $\mathbb{F}_{p^2}$.\vspace{-5.5mm}
\begin{proposition}\label{prop:rationalof7}
If the curve $H$ given in \eqref{eq:type7} is superspecial, then $\delta$ and $(\delta^2-1)^{1/2}\hspace{-0.2mm}$ belong to $\mathbb{F}_{p^2}$.\vspace{-0.9mm}
\begin{proof}
By the assumption, the genus-2 curve $C$ defined as in \eqref{eq:g2-type7} is also superspecial.
Then, thanks to \cite[Main Theorem A]{Ohashi}, we have that $\delta,\delta-1$, and $\delta+1$ are all squares in $\mathbb{F}_{p^2}\hspace{-0.3mm}$, which proves the assertion.
\end{proof}
\end{proposition}\vspace{-4.5mm}

\subsection{ Special case: ${\rm Aut}(H) \supset \mathbf{C}_2 \times \mathbf{D}_{12}$}
In this subsection, we consider a hyperelliptic genus-5 curve\vspace{-2mm}
\begin{equation}\label{eq:type9}
    H: y^2 = x^{12}+Ax^6+1\vspace{-0.8mm}
\end{equation}
with $A \in k\!\smallsetminus\!\{\pm 2\}$.
It follows from Table~\ref{table:classification} that $H$ is of Type {\bf 9} or {\bf 15} (we remark that if $A=0$, then $H$ is of Type {\bf 15}).
This curve $H$ is isomorphic to $H_{a,b,c}$ defined in (\ref{eq:Habc}), where $a,b$, and $c$ are distinct roots of the cubic equation $t^3-3t-A = 0$.
Indeed, the right-hand side of \eqref{eq:Habc}\ can be expanded as\vspace{-2.3mm}
\begin{align*}
    x^{12} + (a+b+c)x^{10} &+ (ab+bc+ca+3)x^8 + (abc+2a+2b+2c)x^6\nonumber\\[-1.2mm]
    & + (ab+bc+ca+3)x^4 + (a+b+c)x^2 + 1,\\[-6.2mm]\nonumber
\end{align*}
which is equal to $x^{12}+Ax^6+1$, since\vspace{-2.2mm}
\[
    a+b+c = 0, \quad ab+bc+ca = -3, \quad abc = A\vspace{-1mm}
\]
holds from the way $a,b,c$ are chosen.
Therefore, the Jacobian decomposition of $H$ can be described using Theorem~\ref{thm:jacof4-1}; \,however, we provide an alternative decomposition in the following proposition:\vspace{-4.7mm}
\begin{proposition}\label{prop:jacof9}
The Jacobian of $H$ given in equation \eqref{eq:type9} is decomposed into the product of five elliptic curves $E_1,E_2,E_3,E_4,E_4$, where we define\vspace{0.4mm}
\begin{align*}
    E_1: Y^2 &= X^3 + (3X+2-A)^2,\\[-1mm]
    E_2: Y^2 &= X^3 + (3X+2+A)^2,\\[-1mm]
    E_3: Y^2 &= X(X^2+AX+1),\\[-1mm]
    E_4: Y^2 &= X^3-3X+A.\\[-4.6mm]
\end{align*}
Therefore, the curve $H$ is superspecial if and only if the four elliptic curves $E_1,E_2,E_3,E_4$ are\\ all supersingular.\vspace{-1mm}
\begin{proof}
Applying \cite[Theorem 1]{KT}, we see that the Jacobian of $H$ is Richelot isogenous to the product of the Jacobians of the genus-2 curve $C_1$ and the genus-3 curve $C_2$, where\vspace{0.3mm}
\begin{align*}
    C_1: y^2 &= x^6+Ax^3+1,\\[-1.4mm]
    C_2: y^2 &= x(x^6+Ax^3+1).\\[-3.8mm]
\end{align*}
The Jacobian of $C_1$ is known (cf. \cite[Lemma 1]{DK}) to be  Richelot isogenous to $E_1 \times E_2$.
Also, it is known (cf. \cite[Proposition 2.1]{LLRS}) that the Jacobian of $C_2$ is isogenous to $E_3 \times E_4^2$, whose degree is a product of powers of 2 and 3.
From the above discussions, we obtain the Jacobian decomposition ${\rm Jac}(H) \sim E_1 \times E_2 \times E_3 \times E_4^2$, as desired.
\end{proof}
\end{proposition}

On the other hand, the superspeciality of $H$ can also be described in terms of some truncated Gaussian hypergeometric series, as we explain below.
First of all, we review the definition of a truncated Gaussian hypergeometric series:\vspace{-4.6mm}
\begin{definition}
For $a,b,c \in \mathbb{C}$ and $d \in \mathbb{N}$ with $-c \notin \mathbb{Z}_{\geq 0}$, we define\vspace{0.5mm}
\[
    G^{(d)}(a,b,c \mid z) \coloneqq \sum_{n=0}^d \frac{(a)_n(b)_n}{(c)_nn!}z^n,\vspace{1.5mm}
\]
where $(x)_n$ denotes the Pochhammer symbol.
If $a,b,c \in \mathbb{Q}$, then $G^{(d)}(a,b,c \mid z)$ can be regarded as a polynomial over $\mathbb{F}_p$, provided that the denominator of each coefficient is coprime to $p$.
\end{definition}\vspace{-2mm}

The curve $H$ given in equation \eqref{eq:type9} is isomorphic to\vspace{-1.2mm}
\begin{equation}\label{eq:deltaof9}
    Y^2 = (X^6-1)(X^6-\delta) \ \text{ with }\, \delta+1/\delta = A^2-2
\end{equation}
via the change of variables $X=\delta^{1/12}x\hspace{0.3mm}$ and $Y=\delta^{1/2}y$.
We remark that $\delta \neq 0,1$ from the assumption $A \neq \pm 2$.
We now give a characterization of the superspeciality of this curve, from which arithmetic properties of the parameter $\delta$ will follow:\vspace{-4.4mm}
\begin{proposition}\label{prop:lamof9}
Assume that $p \geq 11$.
Then, the following statements are true:\vspace{-0.5mm}
\begin{enumerate}
\item[{\rm (a)}] If $p \equiv 1 \pmod{6}$, then the curve $H$ is superspecial if and only if\vspace{-1.9mm}
\[
    \left\{\begin{array}{l}
        G^{((p-1)/6)}(1/2,1/6,2/3 \mid \delta) = 0,\\
        G^{((p-1)/3)}(1/2,1/3,5/6 \mid \delta) = 0,\\
        G^{((p-1)/2)}(1/2,1/2,1 \mid \delta) = 0.
    \end{array}\right.\vspace{-1.3mm}
\]
\item[{\rm (b)}] If $p \equiv 5 \pmod{6}$, then the curve $H$ is superspecial if and only if\vspace{-1.9mm}
\[
    \left\{\begin{array}{l}
        G^{((p-5)/6)}(1/2,5/6,4/3 \mid \delta) = 0,\\
        G^{((p-2)/3)}(1/2,2/3,7/6 \mid \delta) = 0,\\
        G^{((p-1)/2)}(1/2,1/2,1 \mid \delta) = 0.
    \end{array}\right.\vspace{-4.6mm}
\]
\end{enumerate}
\begin{proof}
This assertion follows immediately from \cite[Theorem 3.9]{Ohashi-RIMS} with $g=5$.
\end{proof}
\end{proposition}\vspace{-9.3mm}
\begin{proposition}\label{prop:rationalof9}
If $H$ is superspecial, then $\delta$ belongs to $\mathbb{F}_{p^2}$.\vspace{-1mm}
\begin{proof}
It follows from Proposition~\ref{prop:lamof9} that $G^{((p-1)/2)}(1/2,1/2,1 \mid \delta) = 0$, which is equivalent to the elliptic curve $y^2 = x(x-1)(x-\delta)$ in Legendre form being supersingular.
Hence, it is well-known (cf. \cite[Proposition 2.2]{AT}) that this implies $\delta \in \mathbb{F}_{p^2}$, as desired.
\end{proof}
\end{proposition}\vspace{-4mm}

\subsection{ Special case: ${\rm Aut}(H) \supset \mathbf{C}_2 \times \mathbf{A}_4$}
In this subsection, we consider a hyperelliptic genus-5 curve\vspace{-1.4mm}
\begin{equation}\label{eq:type10}
   H: y^2 = x^{12} - Ax^{10} - 33x^8 + 2Ax^6 - 33x^4 - Ax^2 + 1\vspace{-0.2mm}
\end{equation}
with $A \in k$.
The discriminant of the polynomial on the right-hand side of \eqref{eq:type10} can be computed as $2^{52}(A^2+108)^8$, and thus we obtain that $A^2+108 \neq 0$.
It follows from Table \ref{table:classification} that $H$ is of Type {\bf 10}, \hspace{-0.3mm}{\bf 11}, or {\bf 12} (we note that  $H$ is of Type {\bf 11} if $A=0$, and of Type {\bf 12} if $A^2 = 484/5$).

\begin{remark}
The curve $H$ given in \eqref{eq:type10} with $A^2 = 484/5$ is isomorphic to the curve\vspace{0.5mm}
\[
    H: y^2 = 5^3x^{12} - 22 \cdot 5^2x^{10} - 33 \cdot 5^2 x^8 + 44 \cdot 5x^6 - 33 \cdot 5x^4 - 22x^2 + 1\vspace{1.5mm}
\]
via the transformation $x^4 \mapsto\hspace{-0.2mm} \pm 5x^4$.
We executed the following code in \textsf{Magma}:\vspace{-2.2mm}
\begin{verbatim}
  K := Rationals();
  R<x> := PolynomialRing(K);
  f := 5^3*x^12 - 22*5^2*x^10 - 33*5^2*x^8 + 44*5*x^6 - 33*5*x^4 - 22*x^2 + 1;
  H := HyperellipticCurve(f);
  GroupName(GeometricAutomorphismGroup(H));
\end{verbatim}\vspace{-1.2mm}
As a result, we verify that ${\rm Aut}(H) \cong \mathbf{C}_2 \times \mathbf{A}_5$, and therefore $H$ is of Type~\textbf{12}.
Moreover, as in the discussion in \cite[Section 3.1]{LR}, this result holds over any algebraically closed field $k$ of characteristic $p \neq 2,3,5$.
\end{remark}\vspace{-2.2mm}

Also, the curve $H$ given in equation \eqref{eq:type10} is isomorphic to $H_{a,b,c}$ defined in \eqref{eq:Habc}, where $a,b$, and $c$ are the distinct roots of the cubic equation $t^3+At^2-36t-4A = 0$.
Indeed, the right-hand side of \eqref{eq:Habc} can be expanded as\vspace{-2.2mm}
\begin{align*}
    x^{12} + (a+b+c)x^{10} &+ (ab+bc+ca+3)x^8 + (abc+2a+2b+2c)x^6\nonumber\\[-1.2mm]
    & + (ab+bc+ca+3)x^4 + (a+b+c)x^2 + 1,\\[-6mm]\nonumber
\end{align*}
which is equal to $x^{12} - Ax^{10} - 33x^8 + 2Ax^6 - 33x^4 - Ax^2 + 1$, since\vspace{-2mm}
\begin{equation}\label{eq:abc-type10}
    a+b+c = -A, \quad ab+bc+ca = -36, \quad abc = 4A\vspace{-0.5mm}
\end{equation}
holds from the way $a,b,c$ are chosen.
Therefore, the Jacobian decomposition of $H$ can be described using Theorem \ref{thm:jacof4-1}.\vspace{-4.7mm}
\begin{proposition}\label{prp:jacof10}
The Jacobian of $H$ given in \eqref{eq:type10} is decomposed into the product of five elliptic curves $E_1,E_2,E_3,E_4,E_5$, where we define\vspace{0.5mm}
\begin{align*}
    E_1: Y^2 &= (X-\delta)\Bigl(X-\frac{\delta-1}{\delta}\Bigr)\hspace{-0.3mm}\Bigl(X-\frac{1}{1-\delta}\Bigr),\\[-0.5mm]
    E_2: Y^2 &= X(X-\delta)\Bigl(X-\frac{\delta-1}{\delta}\Bigr)\hspace{-0.3mm}\Bigl(X-\frac{1}{1-\delta}\Bigr),\\[-0.5mm]
    E_3: Y^2 &= (X-1)(X-\delta)\Bigl(X-\frac{\delta-1}{\delta}\Bigr)\hspace{-0.3mm}\Bigl(X-\frac{1}{1-\delta}\Bigr),\\[-0.5mm]
    E_4: Y^2 &= X(X-1)\{X-(1-\delta)(\delta-(\delta^2-\delta+1)^{1/2})^2\},\\[-1.4mm]
    E_5: Y^2 &= X(X-1)\{X-(1-\delta)(\delta+(\delta^2-\delta+1)^{1/2})^2\}\\[-4.4mm]
\end{align*}
with\vspace{-0.7mm}
\begin{equation}\label{eq:deltaof10}
    \delta + \frac{\delta-1}{\delta} + \frac{1}{1-\delta} = \frac{A+6}{4}.\vspace{1.5mm}
\end{equation}
Therefore, the curve $H$ is superspecial if and only if the five elliptic curves $E_1,E_2,E_3,E_4,E_5$ are all supersingular.\vspace{-1mm}
\begin{proof}
For the values $(\lambda,\mu,\nu)$ defined in \eqref{eq:lmn}, it follows from the relation \eqref{eq:abc-type10} that\vspace{1mm}
\begin{align*}
    -(\lambda+\mu+\nu) &= \frac{2+a}{2-a} + \frac{2+b}{2-b} + \frac{2+c}{2-c}\\
    &= \frac{24-4(a+b+c)-2(ab+bc+ca)+3abc}{8-4(a+b+c)+2(ab+bc+ca)-abc} = -\frac{A+6}{4},\\
    \lambda\mu+\mu\nu+\nu\lambda &= \frac{2+a}{2-a}\cdot \frac{2+b}{2-b} + \frac{2+b}{2-b}\cdot \frac{2+c}{2-c} + \frac{2+c}{2-c}\cdot \frac{2+a}{2-a}\\
    &= \frac{24+4(a+b+c)-2(ab+bc+ca)-3abc}{8-4(a+b+c)+2(ab+bc+ca)-abc} = \frac{A-6}{4},\\
    -\lambda\mu\nu &= \frac{2+a}{2-a}\cdot \frac{2+b}{2-b} \cdot \frac{2+c}{2-c}\\
    &= \frac{8+4(a+b+c)+2(ab+bc+ca)+abc}{8-4(a+b+c)+2(ab+bc+ca)-abc} = 1,\\[-5.5mm]
\end{align*}
whence\vspace{-0.2mm}
\begin{align*}
    (X-\lambda)(X-\mu)(X-\nu) &= X^3 - (\lambda+\mu+\nu)X^2 +(\lambda\mu + \mu\nu + \nu\lambda)X - \lambda\mu\nu\\[-0.3mm]
    &= X^3 - \frac{A+6}{4}X^2 + \frac{A-6}{4}X + 1\\[-0.5mm]
    &= (X-\delta)\Bigl(X-\frac{\delta-1}{\delta}\Bigr)\hspace{-0.3mm}\Bigl(X-\frac{1}{1-\delta}\Bigr).
\end{align*}
Therefore, applying Theorem~\ref{thm:jacof4-1}, one can see that the Jacobian of $H$ is decomposed into the product of $E_1,E_2,E_3$ and that of the genus-2 curve\vspace{0.2mm}
\begin{equation}\label{eq:g2-type10}
    C: Y^2 = X(X-1)(X-\delta)\Bigl(X-\frac{\delta-1}{\delta}\Bigr)\hspace{-0.3mm}\Bigl(X-\frac{1}{1-\delta}\Bigr).\vspace{1.4mm}
\end{equation}
This curve corresponds to Class~(2) in \cite[p.\,130]{IKO}, and thus its Jacobian is Richelot isogenous to $E_4 \hspace{0.2mm}\times E_5$ as shown in \cite[Remark 1.4]{IKO}.
From the above discussions, we obtain the Jacobian decomposition ${\rm Jac}(H) \sim E_1 \times E_2 \times E_3 \times E_4 \times E_5$, whose degree is a power of 2.
\end{proof}
\end{proposition}\vspace{-2mm}
The $j$-invariants of each elliptic curve in Proposition \ref{prp:jacof10} can be computed as\vspace{-2.2mm}
\begin{align*}
    \begin{split}
    j(E_1) = j(E_2) = j(E_3) &= 256\frac{(\delta^2-\delta+1)^3}{\delta^2(\delta-1)^2} = 16(A^2+108),\\[-0.5mm]
    j(E_4) &= 256\frac{(\epsilon^2-\epsilon+1)^3}{\epsilon^2(\epsilon-1)^2} \,\text{ and }\, j(E_5) = 256\frac{(\epsilon'^2-\epsilon'+1)^3}{\epsilon'^2(\epsilon'-1)^2},
    \end{split}\\[-7.1mm]
\end{align*}
where we define $\epsilon \coloneqq (1-\delta)(\delta-(\delta^2-\delta+1)^{1/2})^2$ and $\epsilon' \coloneqq (1-\delta)(\delta+(\delta^2-\delta+1)^{1/2})^2$.
Since $E_1 \hspace{-0.1mm}\cong\hspace{-0.2mm} E_2 \hspace{-0.1mm}\cong\hspace{-0.2mm} E_3$, it suffices to check that $E_1,E_4$, and $E_5$ are supersingular in order to show that $H$ is superspecial.
Also, we obtain the following statement:\vspace{-4.5mm}
\begin{corollary}
The curve $H$ of Type {\bf 12} is superspecial if and only if the elliptic curve with $j$-invariant $16384/5$ is supersingular.\vspace{-0.7mm}
\begin{proof}
In the case where $A^2=484/5$, a tedious computation shows that\vspace{0.3mm}
\[
    j(E_1) = j(E_2) = j(E_3) = j(E_4) = j(E_5) = 16384/5,\vspace{1.3mm}
\]
which completes the proof.
\end{proof}
\end{corollary}\vspace{-2mm}
At the end of this section, let us show that superspecial curves $H$ of the form \eqref{eq:type10} can be constructed from only arithmetic operations over $\mathbb{F}_{p^2}$.\vspace{-4.7mm}
\begin{proposition}\label{prop:rationalof10}
If the curve $H\hspace{-0.2mm}$ given in \eqref{eq:type10} is superspecial, then $\delta$ and $(\delta^2-\delta+1)^{1/2}$ belong to $\mathbb{F}_{p^2}$.\vspace{-1mm}
\begin{proof}
By the assumption, the genus-2 curve $C$ defined as in \eqref{eq:g2-type10} is also superspecial.
Then, thanks to \cite[Main Theorem A]{Ohashi}, we obtain that $\delta$ and $\delta-\frac{\delta-1}{\delta}$ are both squares in $\mathbb{F}_{p^2}\hspace{-0.3mm}$, which proves the assertion.
\end{proof}
\end{proposition}\vspace{-4mm}

\section{ Algorithms to enumerate superspecial our curves}\label{sec5}
In this section, we propose algorithms to enumerate superspecial hyperelliptic genus-5 curves whose automorphism groups contain $\hspace{-0.2mm}\mathbf{C}_2^3$, exploiting the properties of the curves of each type introduced in the previous sections.

\subsection{ Generic case: ${\rm Aut}(H) \cong \mathbf{C}_2^3$}
As a consequence of the results established in Section~\ref{sec3}, we can construct superspecial hyperelliptic genus-5 curves of the form $H_{a,b,c}$ given in equation \eqref{eq:Habc} from superspecial genus-2 curves in Rosenhain form.
In particular, in the generic case, the isomorphism testing among these curves can be efficiently performed using Theorem \ref{thm:isomof4-1}.
Then, the algorithm is summarized in Algorithm~\ref{alg:type4-1}.\vspace{-0.8mm}
\begin{algorithm}[htbp]
\caption{}\label{alg:type4-1}\vspace{0.8mm}
{\bf Input:} A prime integer $p > 11$.\\
{\bf Output:} A list of isomorphism classes of of superspecial curves $\hspace{-0.2mm}H_{a,b,c}$ of Type {\bf 4-1} in characteristic $p$.\vspace{-1.3mm}
\begin{enumerate}
    \setlength{\itemindent}{8.5mm}
    \item[\emph{Step 1:}] \,List all superspecial genus-2 curves\vspace{-2.2mm}
    \[
        C_{\lambda,\mu,\nu}: y^2 = x(x-1)(x-\lambda)(x-\mu)(x-\nu)\vspace{-0.9mm}
    \]
    in Rosenhain form, and denote the set of such curves by $\textsf{SSp}_2(p)$.
    This can be done by generating the superspecial Richelot isogeny graph (cf. \cite[Section~5A]{KHH20}).\vspace{1mm}
    \item[\emph{Step 2:}] \,For each $C_{\lambda,\mu,\nu} \hspace{-0.2mm}\in \textsf{SSp}_2(p)$, check whether the three elliptic curves $E_1,E_2,E_3$ given in \eqref{eq:ellof4-1} are all supersingular.
    If this is the case, then compute $a,b,c$ by using the relation \eqref{eq:lmn}; specifically, set\vspace{-2.1 mm}
    \[
        a \coloneqq 2\frac{\lambda+1}{\lambda-1}, \quad b \coloneqq 2\frac{\mu+1}{\mu-1}, \,\text{ and }\, c \coloneqq 2\frac{\nu+1}{\nu-1}\vspace{-1.6mm}
    \]
    and store the resulting triple $(a,b,c)$.\vspace{1mm}
    \item[\emph{Step 3:}] \,For each $(a,b,c)$ stored in \emph{Step 2}, check whether none of the permutations of the following six sets is already contained in the list $\hspace{-0.3mm}\mathcal{L}$:\vspace{-2mm}
    \begin{align*}
        \{a,b,c\},\{-a,-b,-c\},\hspace{0.5mm}&\textstyle\{\frac{12-2a}{2+a},\frac{12-2b}{2+b},\frac{12-2c}{2+c}\},\{-\frac{12-2a}{2+a},-\frac{12-2b}{2+b},-\frac{12-2c}{2+c}\},\\[-1.1mm]
        &\textstyle\{\frac{12+2a}{2-a},\frac{12+2b}{2-b},\frac{12+2c}{2-c}\},\{-\frac{12+2a}{2-a},-\frac{12+2b}{2-b},-\frac{12+2c}{2-c}\}.\\[-7.2mm]
    \end{align*}
    If this is the case, then we add $(a,b,c)$ to the list $\mathcal{L}$ as long as ${\rm Aut}(H_{a,b,c}) \cong \mathbf{C}_2^3$.\vspace{1mm}
    \item[\emph{Step 4:}] \,Output the list of our curves $H_{a,b,c}$ for $(a,b,c) \in \mathcal{L}$.\vspace{-1.2mm}
\end{enumerate}
\end{algorithm}\vspace{-1.2mm}

All computations occupied in Algorithm~\ref{alg:type4-1} can be done over $\mathbb{F}_{p^2}$\hspace{-0.3mm}, and its complexity is $\widetilde{O}(p^3)$.
Indeed, \hspace{0.2mm}\emph{Step~1} enumerates superspecial genus-2 curves with $\widetilde{O}(p^3)$ arithmetic operations over $\mathbb{F}_{p^2}$\hspace{-0.3mm}, and it is also known (cf. \cite[Section~5.1]{OKH23}) that $\hspace{-0.2mm}\#\textsf{SSp}_2(p)=O(p^3)$.
All subsequent steps, such as testing supersingularity of elliptic curves and computing automorphism groups of hyperelliptic curves, run in polynomial time in $\log(p)$.
Hence, this completes the complexity analysis of Algorithm~\ref{alg:type4-1}.

\subsection{ Special case: ${\rm Aut}(H) \supset \mathbf{C}_2^2 \rtimes \mathbf{C}_4$}
In this subsection, we propose an algorithm for enumerating superspecial hyperelliptic genus-5 curves given in equation \eqref{eq:type7}, whose automorphism groups contain $\mathbf{C}_2^2 \rtimes\hspace{0.2mm} \mathbf{C}_4$.
We recall from Corollary~\ref{cor:modof7} that it suffices to consider the case where $p \equiv 3 \pmod{4}$.
Then, the algorithm is summarized in Algorithm~\ref{alg:type7}.\vspace{-0.8mm}

\begin{algorithm}[htbp]
\caption{}\label{alg:type7}\vspace{0.8mm}
{\bf Input:} A prime integer $p > 11$ with $p \equiv 3 \pmod{4}$.\\
{\bf Output:} A list of isomorphism classes of of superspecial curves whose automorphism groups contain $\mathbf{C}_2^2 \hspace{-0.2mm}\rtimes\hspace{-0.2mm} {\bf C}_4$ in characteristic $p$.\vspace{-1.3mm}
\begin{enumerate}
    \setlength{\itemindent}{8.5mm}
    \item[\emph{Step 1:}] \,Compute the complete list $\mathcal{S}_p$ of supersingular $j$-invariants in characteristic $p$.\\
    This can be done by generating the supersingular 2-isogeny graph.\vspace{1mm}
    \item[\emph{Step 2:}] \,For each $j \in \mathcal{S}_p$, compute all $\delta \in \mathbb{F}_{p^2}\hspace{-0.2mm}$ with\vspace{-2.8mm}
    \[
        256\frac{(\delta^2-\delta+1)^3}{\delta^2(\delta-1)^2} = j, \quad \delta \notin \{0,\pm 1\},\vspace{-2.6mm}
    \]
    and check whether\vspace{0.2mm}
    \begin{equation}\label{eq:condof7}
        64\frac{(2\delta-1)^3(2\delta+1)^3}{\delta^2} \in \mathcal{S}_p.\vspace{1mm}
    \end{equation}
    If this is the case, then compute $A$ by using relation \eqref{eq:deltaof7}; \,specifically, set\vspace{-2.5mm}
    \[
        A \coloneqq 2+\frac{16\delta}{(\delta-1)^2}\vspace{-1.8mm}
    \]
    and store the resulting value $A$.\vspace{1mm}
    \item[\emph{Step 3:}] \,For each $A$ stored in \emph{Step 2}, check whether the hyperelliptic genus-5 curve $H$ given in \eqref{eq:type7} is not isomorphic to any element of the list $\mathcal{L}$.
    If this is the case, then\\ we add the curve $H$ to the list $\mathcal{L}$.\vspace{1mm}
    \item[\emph{Step 4:}] \,Output the list $\mathcal{L}$.\vspace{-1.2mm}
\end{enumerate}
\end{algorithm}\vspace{-1.2mm}

Due to Proposition~\ref{prop:rationalof7}, all computations required for Algorithm~\ref{alg:type7} are performed over $\mathbb{F}_{p^2}$.
The complexity of \emph{Step 1} is $\widetilde{O}(p)$, and that of \emph{Step 2} is also $\widetilde{O}(p)$.
Since the number of supersingular $j$-invariants $\in \mathcal{S}_p$ is known to be $O(p)$, the total number of $\delta$ computed in \emph{Step 2} is also $O(p)$.
Also, the probability that condition \eqref{eq:condof7} is satisfied for each $\delta$ is approximately $1/p$, we can estimate as follows:\medskip\\
\noindent {\bf Heuristic.} \,The number of $A$'s listed in \emph{Step 2} is $O(1)$.\medskip\\
\noindent Assuming this heuristic, we estimate the complexity of \emph{Step 3} as $\widetilde{O}(1)$.
Therefore, the total complexity of Algorithm~\ref{alg:type7} is expected to be $\tilde{O}(p)$.

\subsection{ Special case: ${\rm Aut}(H) \supset \mathbf{C}_2 \times \mathbf{D}_{12}$}
In this subsection, we propose an algorithm for enumerating superspecial hyperelliptic genus-5 curves given in equation \eqref{eq:type9} whose automorphism groups contain $\mathbf{C}_2 \hspace{0.2mm}\times\hspace{0.2mm} \mathbf{D}_{12}$.
The algorithm is summarized in Algorithm~\ref{alg:type9}.

\begin{algorithm}[htbp]
\caption{}\label{alg:type9}\vspace{0.8mm}
{\bf Input:} A prime integer $p > 11$.\\
{\bf Output:} A list of isomorphism classes of of superspecial curves whose automorphism groups contain $\mathbf{C}_2 \hspace{-0.2mm}\times\hspace{-0.2mm} {\bf D}_{12}$ in characteristic $p$.\vspace{-1.3mm}
\begin{enumerate}
    \setlength{\itemindent}{8.5mm}
    \item[\emph{Step 1:}] \,Compute $F \coloneqq {\rm gcd}(F_1,F_2,F_3)$, where\vspace{-2.4mm}
    \begin{align*}
        F_1 &\coloneqq \left\{
        \begin{array}{ll}
            G^{((p-1)/6)}(1/2,1/6,2/3 \mid z) & \text{ if }\, p \equiv 1 \!\!\!\pmod{6},\\
            G^{((p-5)/6)}(1/2,5/6,4/3 \mid z) & \text{ if }\, p \equiv 5 \!\!\!\pmod{6},
        \end{array}
        \right.\\[-0.3mm]
        F_2 &\coloneqq \left\{
        \begin{array}{ll}
            G^{((p-1)/3)}(1/2,1/3,5/6 \mid z) & \text{ if }\, p \equiv 1 \!\!\!\pmod{6},\\
            G^{((p-2)/3)}(1/2,2/3,7/6 \mid z) & \text{ if }\, p \equiv 5 \!\!\!\pmod{6},
        \end{array}
        \right.\\[-0.3mm]
        F_3 &\coloneqq G^{((p-1)/2)}(1/2,1/2,1 \mid z),\\[-7.3mm]
    \end{align*}
    and store all roots $\delta \in \mathbb{F}_{p^2}\hspace{-0.3mm}$ of the polynomial $F$.\vspace{1mm}
    \item[\emph{Step 2:}] \,For each $\delta$ stored in \emph{Step 1}, check whether the hyperelliptic genus-5 curve $H$ given in \eqref{eq:deltaof9} is not isomorphic to any element of the list $\mathcal{L}$.
    If this is the case, we add the curve $H$ to the list $\mathcal{L}$.\vspace{1mm}
    \item[\emph{Step 3:}] \,Output the list $\mathcal{L}$.\vspace{-1.2mm}
\end{enumerate}
\end{algorithm}\vspace{-1.2mm}

Due to Proposition~\ref{prop:rationalof9}, all computations required for Algorithm~\ref{alg:type9} are performed over $\mathbb{F}_{p^2}$.
The degree of the polynomial $F$ computed in \emph{Step 1} is theoretically bounded above by that of $F_1$, and experiments for small $p$ indicate the following estimate:\medskip\\
\noindent {\bf Heuristic.} \,The degree of $F$ computed in \emph{Step 1} is $O(1)$.\medskip\\
\noindent Assuming this heuristic, we estimate that the complexity of \emph{Step 1} is $\widetilde{O}(p)$ and that of \emph{Step 2} is $\widetilde{O}(1)$.
Therefore, the total complexity of Algorithm~\ref{alg:type9} is expected to be $\tilde{O}(p)$.

\subsection{ Special case: ${\rm Aut}(H) \supset \mathbf{C}_2 \times \mathbf{A}_4$}
In this subsection, we propose an algorithm for enumerating superspecial hyperelliptic genus-5 curves given in equation \eqref{eq:type10} whose automorphism groups contain $\mathbf{C}_2 \hspace{0.2mm}\times\hspace{0.2mm} \mathbf{A}_4$.
The algorithm is described in Algorithm~\ref{alg:type10}.

\begin{algorithm}[htbp]
\caption{}\label{alg:type10}\vspace{0.8mm}
{\bf Input:} A prime integer $p > 11$.\\
{\bf Output:} A list of isomorphism classes of of superspecial curves whose automorphism groups contain $\mathbf{C}_2 \hspace{-0.2mm}\times\hspace{-0.2mm} {\bf A}_4$ in characteristic $p$.\vspace{-1.3mm}
\begin{enumerate}
    \setlength{\itemindent}{8.5mm}
    \item[\emph{Step 1:}] \,Compute the complete list $\mathcal{S}_p$ of supersingular $j$-invariants in characteristic $p$.\\
    This can be done by generating the supersingular 2-isogeny graph.\vspace{1mm}
    \item[\emph{Step 2:}] \,For each $j \in \mathcal{S}_p$, compute all $\delta \in \mathbb{F}_{p^2}\hspace{-0.2mm}$ with\vspace{-2.4mm}
    \[
        256\frac{(\delta^2-\delta+1)^3}{\delta^2(\delta-1)^2} = j, \quad \delta \notin\hspace{-0.2mm} \{0,1\}, \quad \delta^2-\delta+1 \neq 0\vspace{-2.6mm}
    \]
    and check whether $\delta^2-\delta+1$ is square in $\mathbb{F}_{p^2}\hspace{-0.3mm}$, as well as\vspace{-2.5mm}
    \[
        256\frac{(\epsilon^2-\epsilon+1)^3}{\epsilon^2(\epsilon-1)^2},\,256\frac{(\epsilon'^2-\epsilon'+1)^3}{\epsilon'^2(\epsilon'-1)^2} \in \mathcal{S}_p.\vspace{-1.5mm}
    \]
    with $\epsilon \coloneqq (1-\delta)(\delta-(\delta^2-\delta+1)^{1/2})^2,\hspace{1mm}\epsilon' \coloneqq (1-\delta)(\delta+(\delta^2-\delta+1)^{1/2})^2$.
    If this is the case, then compute $A$ by using relation \eqref{eq:deltaof10}; \,specifically, set\vspace{-2.2mm}
    \[
        A \coloneqq 2\frac{(\delta+1)(\delta-2)(2\delta-1)}{\delta(\delta-1)}\vspace{-1.7mm}
    \]
    and store the resulting value $A$.\vspace{1mm}
    \item[\emph{Step 3:}] \,For each $A$ stored in \emph{Step 2}, check whether the hyperelliptic genus-5 curve $H$ given in \eqref{eq:type10} is not isomorphic to any element of the list $\mathcal{L}$.
    If this is the case, then\\ we add the curve $H$ to the list $\mathcal{L}$.\vspace{1mm}
    \item[\emph{Step 4:}] \,Output the list $\mathcal{L}$.\vspace{-1.2mm}
\end{enumerate}
\end{algorithm}

Due to Proposition~\ref{prop:rationalof10}, all computations required for Algorithm~\ref{alg:type10} are performed over $\mathbb{F}_{p^2}$.
The complexity of \emph{Step 1} is $\widetilde{O}(p)$, and that of \emph{Step 2} is also $\widetilde{O}(p)$.
Moreover, as in Section 5.2, we can estimate as follows:\medskip\\
\noindent {\bf Heuristic.} \,The number of $A$'s listed in \emph{Step 2} is $O(1)$.\medskip\\
\noindent Assuming this heuristic, we estimate the complexity of \emph{Step 3} as $\widetilde{O}(1)$.
Therefore, the total complexity of Algorithm~\ref{alg:type10} is expected to be $\tilde{O}(p)$.

\section{ Experimental results}\label{sec6}
We implemented in \textsf{Magma} Computational Algebra System the algorithms introduced in Section \ref{sec5}.
By executing them for all primes $p$ with $11 < p < 1000$, we have obtained Theorem \ref{thm:main} as our main result.
For the number of isomorphism classes of superspecial curves enumerated for each prime $p$, see Tables \ref{tbl:13-450} and \ref{tbl:450-1000}.

\begin{table}[htbp]
    \centering
    \begin{minipage}{0.47\hsize}
    \hspace{30mm}Types\\[-1mm]
    \begin{tabular}{c||c|ccc|ccc||c}
        $p$ & {\bf 4-1} & \hspace{-0.5mm}{\bf 7}\hspace{-0.5mm} & \hspace{-0.5mm}{\bf 9}\hspace{-0.5mm} & \hspace{-1mm}{\bf 10}\hspace{-1mm} & \hspace{-1mm}{\bf 11}\hspace{-1mm} & \hspace{-1mm}{\bf 12}\hspace{-1mm} & \hspace{-1mm}{\bf 15}\hspace{-1mm} & All\\\hline
        $13$ & 0 & 0 & 0 & 0 & 0 & 0 & 0 & 0\\
        $17$ & 0 & 0 & 0 & 0 & 0 & 0 & 0 & 0\\
        $19$ & 0 & 0 & 0 & 0 & 0 & 0 & 0 & 0\\
        $23$ & 0 & 0 & 0 & 0 & 1 & 0 & 1 & 2\\
        $29$ & 0 & 0 & 0 & 1 & 0 & 0 & 0 & 1\\
        $31$ & 1 & 1 & 0 & 1 & 0 & 0 & 0 & 3\\
        $37$ & 0 & 0 & 0 & 0 & 0 & 0 & 0 & 0\\
        $41$ & 0 & 0 & 0 & 1 & 0 & 0 & 0 & 0\\
        $43$ & 0 & 0 & 0 & 1 & 0 & 0 & 0 & 0\\
        $47$ & 1 & 0 & 0 & 0 & 1 & 0 & 1 & 3\\
        $53$ & 0 & 0 & 0 & 0 & 0 & 0 & 0 & 0\\
        $59$ & 0 & 1 & 0 & 1 & 1 & 0 & 1 & 4\\
        $61$ & 0 & 0 & 0 & 0 & 0 & 0 & 0 & 0\\
        $67$ & 0 & 0 & 0 & 0 & 0 & 0 & 0 & 0\\
        $71$ & 4 & 1 & 0 & 2 & 1 & 0 & 1 & 9\\
        $73$ & 0 & 0 & 0 & 0 & 0 & 0 & 0 & 0\\
        $79$ & 1 & 1 & 0 & 0 & 0 & 0 & 0 & 2\\
        $83$ & 2 & 1 & 0 & 1 & 1 & 0 & 1 & 6\\
        $89$ & 2 & 0 & 0 & 2 & 0 & 0 & 0 & 4\\
        $97$ & 0 & 0 & 0 & 0 & 0 & 0 & 0 & 0\\
        $101$ & 2 & 0 & 0 & 2 & 0 & 0 & 0 & 4\\
        $103$ & 0 & 1 & 0 & 0 & 0 & 0 & 0 & 1\\
        $107$ & 3 & 1 & 0 & 1 & 1 & 0 & 1 & 7\\
        $109$ & 1 & 0 & 0 & 1 & 0 & 0 & 0 & 2\\
        $113$ & 1 & 0 & 0 & 1 & 0 & 0 & 0 & 2\\
        $127$ & 0 & 2 & 0 & 0 & 0 & 0 & 0 & 2\\
        $131$ & 10 & 0 & 1 & 1 & 1 & 1 & 1 & 15\\
        $137$ & 0 & 0 & 0 & 1 & 0 & 0 & 0 & 1\\
        $139$ & 1 & 2 & 0 & 0 & 0 & 0 & 0 & 3\\
        $149$ & 1 & 0 & 0 & 1 & 0 & 0 & 0 & 2\\
        $151$ & 5 & 3 & 0 & 0 & 0 & 0 & 0 & 8\\
        $157$ & 0 & 0 & 0 & 2 & 0 & 0 & 0 & 2\\
        $163$ & 4 & 0 & 0 & 0 & 0 & 0 & 0 & 4\\
        $167$ & 8 & 0 & 1 & 1 & 1 & 0 & 1 & 12\\
        $173$ & 2 & 0 & 0 & 2 & 0 & 0 & 0 & 4\\
        $179$ & 3 & 2 & 0 & 1 & 1 & 0 & 1 & 8\\
        $181$ & 0 & 0 & 0 & 0 & 0 & 0 & 0 & 0\\
        $191$ & 23 & 2 & 0 & 4 & 1 & 0 & 1 & 31\\
        $193$ & 0 & 0 & 0 & 0 & 0 & 0 & 0 & 0\\
        $197$ & 0 & 0 & 0 & 0 & 0 & 0 & 0 & 0\\
        $199$ & 4 & 3 & 0 & 0 & 0 & 0 & 0 & 7
    \end{tabular}
    \end{minipage}\hspace{2mm}
    \begin{minipage}{0.47\hsize}
    \hspace{30mm}Types\\[-1mm]
    \begin{tabular}{c||c|ccc|ccc||c}
        $p$ & {\bf 4-1} & \hspace{-0.5mm}{\bf 7}\hspace{-0.5mm} & \hspace{-0.5mm}{\bf 9}\hspace{-0.5mm} & \hspace{-1mm}{\bf 10}\hspace{-1mm} & \hspace{-1mm}{\bf 11}\hspace{-1mm} & \hspace{-1mm}{\bf 12}\hspace{-1mm} & \hspace{-1mm}{\bf 15}\hspace{-1mm} & All\\\hline
        $211$ & 2 & 0 & 0 & 0 & 0 & 0 & 0 & 2\\
        $223$ & 7 & 2 & 0 & 1 & 0 & 0 & 0 & 10\\
        $227$ & 2 & 0 & 0 & 2 & 1 & 0 & 1 & 6\\
        $229$ & 0 & 0 & 1 & 1 & 0 & 0 & 0 & 2\\
        $233$ & 0 & 0 & 0 & 0 & 0 & 0 & 0 & 0\\
        $239$ & 26 & 3 & 0 & 3 & 1 & 0 & 1 & 34\\
        $241$ & 0 & 0 & 0 & 0 & 0 & 0 & 0 & 0\\
        $251$ & 6 & 4 & 1 & 2 & 1 & 1 & 1 & 16\\
        $257$ & 14 & 0 & 0 & 2 & 0 & 0 & 0 & 16\\
        $263$ & 14 & 1 & 0 & 2 & 1 & 0 & 1 & 19\\
        $269$ & 2 & 0 & 0 & 4 & 0 & 0 & 0 & 6\\
        $271$ & 4 & 1 & 0 & 0 & 0 & 0 & 0 & 5\\
        $277$ & 0 & 0 & 0 & 0 & 0 & 0 & 0 & 0\\
        $281$ & 6 & 0 & 0 & 4 & 0 & 0 & 0 & 10\\
        $283$ & 0 & 5 & 0 & 2 & 0 & 0 & 0 & 7\\
        $293$ & 5 & 0 & 0 & 1 & 0 & 0 & 0 & 6\\
        $307$ & 2 & 0 & 0 & 0 & 0 & 0 & 0 & 2\\
        $311$ & 34 & 4 & 0 & 1 & 1 & 0 & 1 & 41\\
        $313$ & 0 & 0 & 0 & 0 & 0 & 0 & 0 & 0\\
        $317$ & 2 & 0 & 0 & 0 & 0 & 0 & 0 & 2\\
        $331$ & 0 & 0 & 0 & 0 & 0 & 0 & 0 & 0\\
        $337$ & 2 & 0 & 0 & 0 & 0 & 0 & 0 & 2\\
        $347$ & 2 & 3 & 0 & 1 & 1 & 0 & 1 & 8\\
        $349$ & 5 & 0 & 0 & 0 & 0 & 0 & 0 & 5\\
        $353$ & 1 & 0 & 0 & 0 & 0 & 0 & 0 & 1\\
        $359$ & 33 & 7 & 0 & 1 & 1 & 0 & 1 & 43\\
        $367$ & 2 & 1 & 0 & 0 & 0 & 0 & 0 & 3\\
        $373$ & 0 & 0 & 0 & 0 & 0 & 0 & 0 & 0\\
        $379$ & 2 & 0 & 0 & 0 & 0 & 0 & 0 & 2\\
        $383$ & 38 & 7 & 1 & 2 & 1 & 0 & 1 & 50\\
        $389$ & 1 & 0 & 0 & 4 & 0 & 0 & 0 & 5\\
        $397$ & 0 & 0 & 0 & 1 & 0 & 0 & 0 & 1\\
        $401$ & 6 & 0 & 0 & 3 & 0 & 0 & 0 & 9\\
        $409$ & 2 & 0 & 0 & 0 & 0 & 0 & 0 & 2\\
        $419$ & 11 & 3 & 0 & 1 & 1 & 0 & 1 & 17\\
        $421$ & 0 & 0 & 0 & 0 & 0 & 0 & 0 & 0\\
        $431$ & 20 & 3 & 1 & 2 & 1 & 0 & 1 & 28\\
        $433$ & 0 & 0 & 0 & 1 & 0 & 0 & 0 & 1\\
        $439$ & 9 & 2 & 0 & 1 & 0 & 0 & 0 & 12\\
        $443$ & 3 & 0 & 0 & 2 & 1 & 0 & 1 & 7\\
        $449$ & 2 & 0 & 0 & 1 & 0 & 0 & 0 & 3
    \end{tabular}
    \end{minipage}\vspace{2.5mm}

    \begin{minipage}{0.83\hsize}
    \caption{\hspace{0.5mm}The number of isomorphism classes of superspecial genus-5 hyperelliptic curves $H$ such that ${\rm Aut}(H) \supset (\mathbb{Z}/2\mathbb{Z})^3$ in characteristic $13 \leq p < 450$}\label{tbl:13-450}
    \end{minipage}
\end{table}

\begin{table}[htbp]
    \centering
    \begin{minipage}{0.47\hsize}
    \hspace{30mm}Types\\[-1mm]
    \begin{tabular}{c||c|ccc|ccc||c}
        $p$ & {\bf 4-1} & \hspace{-0.5mm}{\bf 7}\hspace{-0.5mm} & \hspace{-0.5mm}{\bf 9}\hspace{-0.5mm} & \hspace{-1mm}{\bf 10}\hspace{-1mm} & \hspace{-1mm}{\bf 11}\hspace{-1mm} & \hspace{-1mm}{\bf 12}\hspace{-1mm} & \hspace{-1mm}{\bf 15}\hspace{-1mm} & All\\\hline
        $457$ & 0 & 0 & 0 & 2 & 0 & 0 & 0 & 2\\
        $461$ & 3 & 0 & 0 & 2 & 0 & 0 & 0 & 5\\
        $463$ & 6 & 2 & 0 & 0 & 0 & 0 & 0 & 8\\
        $467$ & 3 & 0 & 0 & 0 & 1 & 0 & 1 & 5\\
        $479$ & 30 & 4 & 1 & 4 & 1 & 0 & 1 & 41\\
        $487$ & 0 & 2 & 0 & 0 & 0 & 0 & 0 & 2\\
        $491$ & 9 & 3 & 1 & 5 & 1 & 1 & 1 & 21\\
        $499$ & 0 & 0 & 0 & 1 & 0 & 0 & 0 & 1\\
        $503$ & 16 & 7 & 1 & 2 & 1 & 0 & 1 & 28\\
        $509$ & 2 & 0 & 0 & 1 & 0 & 0 & 0 & 3\\
        $521$ & 4 & 0 & 0 & 3 & 0 & 0 & 0 & 7\\
        $523$ & 2 & 0 & 0 & 0 & 0 & 0 & 0 & 2\\
        $541$ & 0 & 0 & 0 & 0 & 0 & 0 & 0 & 0\\
        $547$ & 1 & 2 & 0 & 0 & 0 & 0 & 0 & 3\\
        $557$ & 1 & 0 & 0 & 1 & 0 & 0 & 0 & 2\\
        $563$ & 9 & 0 & 0 & 2 & 1 & 0 & 1 & 13\\
        $569$ & 2 & 0 & 0 & 2 & 0 & 0 & 0 & 4\\
        $571$ & 0 & 0 & 0 & 0 & 0 & 0 & 0 & 0\\
        $577$ & 1 & 0 & 0 & 0 & 0 & 0 & 0 & 1\\
        $587$ & 0 & 2 & 0 & 2 & 1 & 0 & 1 & 6\\
        $593$ & 1 & 0 & 0 & 2 & 0 & 0 & 0 & 3\\
        $599$ & 25 & 5 & 1 & 3 & 1 & 1 & 1 & 37\\
        $601$ & 0 & 0 & 0 & 4 & 0 & 0 & 0 & 4\\
        $607$ & 7 & 2 & 0 & 1 & 0 & 0 & 0 & 9\\
        $613$ & 0 & 0 & 0 & 0 & 0 & 0 & 0 & 0\\
        $617$ & 3 & 0 & 0 & 1 & 0 & 0 & 0 & 4\\
        $619$ & 0 & 1 & 0 & 0 & 0 & 0 & 0 & 1\\
        $631$ & 5 & 1 & 0 & 0 & 0 & 0 & 0 & 6\\
        $641$ & 2 & 0 & 0 & 3 & 0 & 0 & 0 & 5\\
        $643$ & 0 & 0 & 0 & 0 & 0 & 0 & 0 & 0\\
        $647$ & 19 & 4 & 0 & 0 & 1 & 0 & 1 & 25\\
        $653$ & 1 & 0 & 0 & 2 & 0 & 0 & 0 & 3\\
        $659$ & 13 & 5 & 0 & 4 & 1 & 0 & 1 & 24\\
        $661$ & 1 & 0 & 0 & 0 & 0 & 0 & 0 & 1\\
        $673$ & 0 & 0 & 0 & 0 & 0 & 0 & 0 & 0\\
        $677$ & 0 & 0 & 0 & 2 & 0 & 0 & 0 & 2\\
        $683$ & 0 & 0 & 0 & 0 & 1 & 0 & 1 & 2\\
        $691$ & 3 & 2 & 0 & 2 & 0 & 0 & 0 & 7\\
        $701$ & 1 & 0 & 0 & 3 & 0 & 0 & 0 & 4\\
        $709$ & 0 & 0 & 0 & 0 & 0 & 0 & 0 & 0\\
        $719$ & 53 & 7 & 2 & 6 & 1 & 0 & 1 & 70
    \end{tabular}
    \end{minipage}\hspace{2mm}
    \begin{minipage}{0.47\hsize}
    \hspace{30mm}Types\\[-1mm]
    \begin{tabular}{c||c|ccc|ccc||c}
        $p$ & {\bf 4-1} & \hspace{-0.5mm}{\bf 7}\hspace{-0.5mm} & \hspace{-0.5mm}{\bf 9}\hspace{-0.5mm} & \hspace{-1mm}{\bf 10}\hspace{-1mm} & \hspace{-1mm}{\bf 11}\hspace{-1mm} & \hspace{-1mm}{\bf 12}\hspace{-1mm} & \hspace{-1mm}{\bf 15}\hspace{-1mm} & All\\\hline
        $727$ & 4 & 4 & 0 & 2 & 0 & 0 & 0 & 10\\
        $733$ & 0 & 0 & 0 & 2 & 0 & 0 & 0 & 2\\
        $739$ & 4 & 1 & 0 & 1 & 0 & 0 & 0 & 6\\
        $743$ & 15 & 2 & 0 & 3 & 1 & 0 & 1 & 22\\
        $751$ & 9 & 2 & 0 & 0 & 0 & 0 & 0 & 11\\
        $757$ & 1 & 0 & 0 & 0 & 0 & 0 & 0 & 1\\
        $761$ & 1 & 0 & 0 & 1 & 0 & 0 & 0 & 2\\
        $769$ & 0 & 0 & 0 & 0 & 0 & 0 & 0 & 0\\
        $773$ & 5 & 0 & 0 & 0 & 0 & 0 & 0 & 5\\
        $787$ & 0 & 1 & 0 & 0 & 0 & 0 & 0 & 1\\
        $797$ & 0 & 0 & 0 & 2 & 0 & 0 & 0 & 2\\
        $809$ & 1 & 0 & 0 & 1 & 0 & 0 & 0 & 2\\
        $811$ & 0 & 0 & 0 & 4 & 0 & 0 & 0 & 4\\
        $821$ & 4 & 0 & 0 & 4 & 0 & 0 & 0 & 8\\
        $823$ & 1 & 2 & 0 & 0 & 0 & 0 & 0 & 3\\
        $827$ & 2 & 3 & 0 & 0 & 1 & 0 & 1 & 7\\
        $829$ & 1 & 0 & 0 & 0 & 0 & 0 & 0 & 1\\
        $839$ & 50 & 1 & 2 & 7 & 1 & 0 & 1 & 62\\
        $853$ & 2 & 0 & 0 & 0 & 0 & 0 & 0 & 2\\
        $857$ & 1 & 0 & 0 & 1 & 0 & 0 & 0 & 2\\
        $859$ & 2 & 6 & 0 & 0 & 0 & 0 & 0 & 8\\
        $863$ & 14 & 1 & 0 & 2 & 1 & 0 & 1 & 19\\
        $877$ & 0 & 0 & 0 & 0 & 0 & 0 & 0 & 0\\
        $881$ & 5 & 0 & 0 & 3 & 0 & 0 & 0 & 8\\
        $883$ & 2 & 0 & 0 & 0 & 0 & 0 & 0 & 2\\
        $887$ & 29 & 5 & 1 & 2 & 1 & 0 & 1 & 39\\
        $907$ & 2 & 1 & 0 & 0 & 0 & 0 & 0 & 3\\
        $911$ & 41 & 6 & 0 & 2 & 1 & 0 & 1 & 51\\
        $919$ & 7 & 1 & 0 & 0 & 0 & 0 & 0 & 8\\
        $929$ & 5 & 0 & 0 & 1 & 0 & 0 & 0 & 6\\
        $937$ & 1 & 0 & 0 & 0 & 0 & 0 & 0 & 1\\
        $941$ & 3 & 0 & 0 & 4 & 0 & 0 & 0 & 7\\
        $947$ & 4 & 1 & 0 & 0 & 1 & 0 & 1 & 7\\
        $953$ & 2 & 0 & 0 & 1 & 0 & 0 & 0 & 3\\
        $967$ & 3 & 5 & 0 & 0 & 0 & 0 & 0 & 8\\
        $971$ & 10 & 4 & 0 & 5 & 1 & 0 & 1 & 21\\
        $977$ & 0 & 0 & 0 & 0 & 0 & 0 & 0 & 0\\
        $983$ & 13 & 4 & 0 & 2 & 1 & 0 & 1 & 21\\
        $991$ & 3 & 0 & 0 & 0 & 0 & 0 & 0 & 3\\
        $997$ & 0 & 1 & 0 & 1 & 0 & 0 & 0 & 2\\
        {} & & & & & & & &
        \end{tabular}
    \end{minipage}\vspace{2.5mm}

    \begin{minipage}{0.83\hsize}
    \caption{\hspace{0.5mm}The number of isomorphism classes of superspecial genus-5 hyperelliptic curves $H$ such that ${\rm Aut}(H) \supset (\mathbb{Z}/2\mathbb{Z})^3$ in characteristic $450 < p < 1000$}\label{tbl:450-1000}
    \end{minipage}
\end{table}

\section{ Concluding remarks}\label{sec7}
We presented an algorithm with complexity $\widetilde{O}(p^3)$ arithmetic operations over $\mathbb{F}_{p^2}\hspace{-0.4mm}$ for enumerating superspecial hyperelliptic curves of genus 5 whose automorphism groups contain $(\mathbb{Z}/2\mathbb{Z})^3$.
Executing our algorithm in \textsf{Magma} Computational Algebra System, we succeeded in enumerating such curves for primes $p$ with $11 < p < 1000$.

Our computational results also reveal an interesting phenomenon: \hspace{0.5mm}there are many primes $p$ for which no superspecial hyperelliptic curve of genus $5$ whose automorphism groups contain $(\mathbb{Z}/2\mathbb{Z})^3$ exists in characteristic $p$, in contrast to the case of hyperelliptic curves of genus $4$ whose automorphism groups contain $\hspace{-0.2mm}\mathbf{D}_8$.
Hence, in order to confirm the existence of superspecial hyperelliptic curves of genus $5$ for every prime $p$, it may be necessary to investigate a broader class (for example, Type {\bf 2-1} in Table~\ref{table:classification}).
However, this would require preparing several superspecial curves of genus $3$, which significantly increases the computational complexity (cf. \cite{OOKYN}).
As future work, we plan to construct an efficient enumeration algorithm for the case of non-hyperelliptic curves of genus $5$, analogous to the one developed in this work.

\bibliography{sn-bibliography}
\end{document}